
\documentclass[twoside,12pt,leqno]{amsproc}
\usepackage{amssymb,latexsym,enumerate,verbatim,tikz}
\usetikzlibrary{patterns}
\usepackage[pagebackref]{hyperref}
\usepackage{amsrefs}    
\usepackage{etoolbox}   
\hypersetup{citecolor=red, linkcolor=blue, colorlinks=true}
\numberwithin{table}{section}

\theoremstyle{plain}
\newtheorem{theorem}{Theorem}[section]
\newtheorem{lemma}[theorem]{Lemma}
\newtheorem{proposition}[theorem]{Proposition}
\newtheorem{corollary}[theorem]{Corollary}

\theoremstyle{definition} 

\newtheorem{remark}[theorem]{Remark}
\AtEndEnvironment{remark}{\null\hfill$\triangle$}

\binoppenalty=10000\relpenalty=10000\hyphenpenalty=9200

\oddsidemargin 0pt
\evensidemargin 0pt
\textheight 8.1in \textwidth 6.3in
\setlength{\parskip}{5pt}

\renewcommand{\geq}{\geqslant}

\renewcommand{\ge}{\geqslant}
\renewcommand{\le}{\leqslant}

\newcommand{\F}{\mathbb{F}}
\newcommand{\GL}{\mathrm{GL}}
\newcommand{\inv}{\textup{inv}}
\newcommand{\nil}{\textup{nil}}

\newcommand{\rk}{\mathrm{rk}}

\renewcommand{\Vec}[1]{\overline{#1}}

\makeatletter        
\def\@adminfootnotes{%
  \let\@makefnmark\relax  \let\@thefnmark\relax
  \ifx\@empty\@date\else \@footnotetext{\@setdate}\fi
  \ifx\@empty\@subjclass\else \@footnotetext{\@setsubjclass}\fi
  \ifx\@empty\@keywords\else \@footnotetext{\@setkeywords}\fi
  \ifx\@empty\thankses\else \@footnotetext{%
    \def\par{\let\par\@par}\@setthanks}%
  \fi}\makeatother   


\begin{document}

\hyphenation{}

\title[]{On the dimension of twisted centralizer codes}
\author{Adel Alahmadi, S.\,P. Glasby, Cheryl E. Praeger}

\address[Glasby]{
Centre for Mathematics of Symmetry and Computation\\
University of Western Australia\\
35 Stirling Highway\\
Crawley 6009, Australia. Also affiliated with The Faculty of Information
Sciences and Engineering, University of Canberra, ACT 2601, Australia. Email: {\tt Stephen.Glasby@uwa.edu.au; WWW: \href{http://www.maths.uwa.edu.au/~glasby/}{http://www.maths.uwa.edu.au/$\sim$glasby/}}}
\address[Praeger]
{Centre for Mathematics of Symmetry and Computation\\
University of Western Australia\\
35 Stirling Highway\\
Crawley 6009, Australia. Also affiliated with King Abdulaziz University, Jeddah,
 Saudi Arabia. Email: {\tt Cheryl.Praeger@uwa.edu.au;\newline WWW: \href{http://www.maths.uwa.edu.au/~praeger}{http://www.maths.uwa.edu.au/$\sim$praeger}}}
\address[Alahmadi]{Dept. of  Mathematics, King Abdulaziz University, Jeddah, Saudi Arabia.}

\let\thefootnote\relax\footnotetext{{\bf MSC 2000 Classification:} Primary 94B65, Secondary 60C05} 
\date{\today}

\begin{abstract}
Given a field $F$, a scalar $\lambda\in F$ and a matrix $A\in F^{n\times n}$,
the \emph{twisted centralizer code}
$C_F(A,\lambda):=\{B\in F^{n\times n}\mid AB-\lambda BA=0\}$ is a linear code
of length $n^2$ over~$F$. When $A$ is cyclic
and $\lambda\ne0$ we prove that
$\dim C_F(A,\lambda)=\deg(\gcd(c_A(t),\lambda^nc_A(\lambda^{-1}t)))$ where
$c_A(t)$ denotes the
characteristic polynomial of $A$. We also show how $C_F(A,\lambda)$
decomposes, and we estimate the probability that $C_F(A,\lambda)$ is
nonzero when $|F|$ is finite. Finally, we prove
$\dim C_F(A,\lambda)\le n^2/2$ for $\lambda\not\in\{0,1\}$
and `almost all' $n\times n$ matrices~$A$ over $F$.
\end{abstract}

\maketitle

\section{Introduction}\label{S:Intro}

Fix a (commutative) field $F$, a scalar $\lambda\in F$, and a
matrix $A\in F^{n\times n}$. The subspace
\[
  C_F(A,\lambda):=\{B\in F^{n\times n}\mid AB-\lambda BA=0\}.
\]
is called a \emph{twisted centralizer code}. 
We are primarily interested in the case when $F=\F_q$ is a finite field,
in which case $C_F(A,\lambda)$ is a linear code. This paper is
motivated by results in~\cite{CC}. We rely heavily on the fact that replacing
$A$ by a conjugate does not change $\dim C_F(A,\lambda)$.
By contrast, the Hamming weight
of a matrix (the number of nonzero entries), is highly sensitive to
change of basis. 
Coding theory applications are considered in~\cite{TCC}.

Let $c_A(t)$ be the characteristic
polynomial of $A$, namely $\det(tI-A)$, and let $m_A(t)$ be its minimal
polynomial. Let $\overline{F}$ be the algebraic closure of $F$.
Let $S(A)$ denote the set of roots of $m_A(t)$ and hence also of $c_A(t)$
in $\overline{F}$, and let $L$ be the subfield of $\overline{F}$ containing
$S(A)$ and $F$, i.e. the splitting (sub)field for $c_A(t)$. Suppose
$c_A(t)=\prod_{\alpha\in S(A)}(t-\alpha)^{m_\alpha}$ for positive integers
$m_\alpha$. View $L^n$ as a right $L[A]$-module (usually as
$n$-dimensional row vectors over $L$).
For $\alpha\in L$
let $K_\alpha$ be the $\alpha$-eigenspace $\{v\in L^n\mid vA=\alpha v\}$, and
set $k_\alpha=\dim(K_\alpha)$.
Note that $K_\alpha\ne\{0\}$ if and only if $\alpha\in S(A)$.
Thus, in particular, $K_0$ is the row null space of $A$ which we
also denote $\textup{RNull}_L(A)$, and $k_0$ is the nullity of~$A$.
The $(t-\alpha)$-primary $L[A]$-submodule of $L^n$ is
$M_\alpha=\{v\in L^n\mid v(A-\alpha I)^n=0\}$.
Observe that $\dim(M_\alpha)=m_\alpha$ and $K_\alpha\le M_\alpha$, so that
$k_\alpha\le m_\alpha$. To stress the dependence on~$A$, we write
$K_{A,\alpha}, k_{A,\alpha}, M_{A,\alpha}, m_{A,\alpha}$. When $\lambda=1$,
we write $C_F(A)$ instead of $C_F(A,1)$. Note that $C_F(A,\lambda)$
is a module for the $F$-algebra $C_F(A)$.

In Section~\ref{S:nilinv} we see how
$C_F(A,\lambda)$ decomposes, and in Section~\ref{S:Parity} we identify
$C_F(A,\lambda)$ with the null space of a (parity check) matrix.
When $\lambda\ne0$, the `$\lambda$-twisted' characteristic polynomial
$\lambda^n c_A(\lambda^{-1}t)$ is closely related to $\dim C_F(A,\lambda)$.
In Section~\ref{S:cyclic} we prove that $\dim C_F(A,\lambda)=
\deg(\gcd(c_A(t),\lambda^n c_A(\lambda^{-1}t)))$ when $A$ is cyclic
and $\lambda\ne0$. If $\lambda\ne 0$ then 
$C_F(A,\lambda)\cong C_F(A_\nil,\lambda)\oplus C_F(A_\inv,\lambda)$
by Theorem~\ref{T:decom} where $A_\nil$ and $A_\inv$
denote the `nilpotent and invertible parts' of $A$.
Theorem~\ref{T:decom2} gives a decomposition of $C_L(A,\lambda)$ related
to the $(t-\alpha)$-primary $L[A]$-submodules of $L^n$
provided  $F$ has prime characteristic, and a certain condition holds
involving
$\lambda^n c_A(\lambda^{-1}t)$ and~$\lambda^{-n} c_A(\lambda t)$.

If $\det(A)=0$, then $C_F(A,\lambda)\ne\{0\}$
for all $\lambda\in F$ by Corollary~\ref{C:det}.
In Section~\ref{S:Quokka} we show  when $\lambda\not\in\{0,1\}$
that the probability is positive, that a uniformly distributed
$A\in\F_q^{n\times n}$ has $C_{\F_q}(A,\lambda)\ne\{0\}$.
In Section~\ref{S:UB} we establish upper bounds for
$\dim C_F(A,\lambda)$, and Theorem~\ref{T:UB} 
proves that $\lambda\not\in\{0,1\}$ and $k_0+m_0/2\le n$
implies that $\dim C_F(A,\lambda)\le n^2/2$.  The bound
$n^2/2$ is attained when $n$ is even and $\lambda=-1$ by Remark~\ref{R:2m}.

\section{The nilpotent and invertible parts of \texorpdfstring{$A$}{}}
\label{S:nilinv}

A matrix $A\in F^{n\times n}$ is called \emph{nilpotent} if
$A^n=0$. 
The $n$-dimensional row space $V=F^n$ decomposes as $V=V_\nil\oplus V_\inv$
where 
\[
  V_\nil:=\{v\in V\mid vA^n=0\}\qquad\textup{and}\qquad
  V_\inv:=\{vA^n\mid v\in V\}.
\]
Thus $A$ is conjugate to a block diagonal matrix $A_\nil\oplus A_\inv$
where $A_\nil$ is nilpotent on $V_\nil$, and $A_\inv$ is invertible on~$V_\inv$.

\begin{lemma}\label{L:k0}
If $A\in F^{n\times n}$ has nullity $k_0$, then
$\dim C_F(A,0)=k_0n$. In particular, if $A\ne0$ then $\dim C_F(A,0)\le n(n-1)$,
and $C_F(A,0)$ contains no invertible matrices.
\end{lemma}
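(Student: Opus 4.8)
The plan is to unwind the definition: since $\lambda=0$, we have $C_F(A,0)=\{B\in F^{n\times n}\mid AB=0\}$, which is the kernel of the linear map $\mu_A\colon F^{n\times n}\to F^{n\times n}$, $B\mapsto AB$. The dimension will follow from rank--nullity once the image of $\mu_A$ is identified.

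First I would compute $\dim(\operatorname{im}\mu_A)$. Every matrix $AB$ has each of its $n$ columns in the column space of $A$, which has dimension $\rk(A)=n-k_0$; conversely, prescribing each column of $B$ independently realizes every matrix whose columns lie in that column space. Hence $\dim(\operatorname{im}\mu_A)=(n-k_0)n$, and rank--nullity gives $\dim C_F(A,0)=n^2-(n-k_0)n=k_0n$. (Equivalently one can argue columnwise from the start: $AB=0$ precisely when each column of $B$ lies in the right null space of $A$ on column vectors, a space of dimension $k_0$ since row and column nullities of $A$ agree, giving $n$ independent choices each from a $k_0$-dimensional space.)

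For the two stated consequences: if $A\ne0$ then $\rk(A)\ge1$, so $k_0\le n-1$ and thus $\dim C_F(A,0)=k_0n\le n(n-1)$. If some $B\in C_F(A,0)$ were invertible, then $AB=0$ would force $A=(AB)B^{-1}=0$, contradicting $A\ne0$; hence $C_F(A,0)$ contains no invertible matrices.

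There is no real obstacle here; the argument is elementary. The only point needing a little care is maintaining the paper's row-vector convention when translating between the ``columns of $B$ lie in $\ker A$'' description and the definition of $K_0$ as a row null space, and this is harmless since the two nullities coincide.
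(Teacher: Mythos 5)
Your proof is correct and uses essentially the same columnwise argument as the paper: $AB=0$ holds iff every column of $B$ lies in the column null space of $A$, which has dimension $k_0$, giving $\dim C_F(A,0)=k_0n$. Your rank--nullity phrasing and the $A=(AB)B^{-1}$ argument for non-invertibility are cosmetic variants of what the paper does.
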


\begin{proof}
A necessary and sufficient condition for $B\in C_F(A,0)$ is $AB=0$, or that
every column of $B$ lies in the column null space of $A$. Thus $A\ne0$ implies
that each such $B$ is singular, and since
$\dim\textup{CNull}_F(A)=\dim\textup{RNull}_F(A)$ equals $k_0$, we have
$\dim C_F(A,0)=k_0n$. The maximum value of $k_0$ is $n-1$ when $A\ne0$. 
\end{proof}

Let $E_{i,j}\in F^{n\times n}$ have 1 in position $(i,j)$ and zeros elsewhere.
If $A$ is conjugate to $E_{1,1}$ or $E_{1,2}$, then $\dim C_F(A,0)=(n-1)n$
by Lemma~\ref{L:k0}. We see later (Corollary~\ref{C:bound})
that, for any $\lambda$, $d=\dim C_F(A,\lambda)$ cannot lie
in the range $(n-1)n<d<n^2$.  For $F$-subspaces of matrices $U$ and $W$, we write 
$U\cong W$ to mean that $U$ and $W$ are isomorphic as vector spaces.   

\begin{theorem}\label{T:decom}
If $A\in F^{n\times n}$ and $0\kern-1pt\ne\kern-1pt\lambda\in F$, then
$C_F(A,\lambda)\cong C_F(A_\nil,\lambda)\oplus C_F(A_\inv,\lambda)$.
\end{theorem}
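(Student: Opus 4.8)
The key fact I would exploit is that $A$ is conjugate to the block diagonal matrix $A_\nil\oplus A_\inv$ acting on $V=V_\nil\oplus V_\inv$, and that $\dim C_F(A,\lambda)$ is conjugation-invariant (noted in the introduction). So I may as well assume $A=A_\nil\oplus A_\inv$ from the start, written in block form with respect to the decomposition $V=V_\nil\oplus V_\inv$. Write an arbitrary $B\in F^{n\times n}$ in the corresponding block form $B=\begin{pmatrix}B_{11}&B_{12}\\B_{21}&B_{22}\end{pmatrix}$, where $B_{11}$ is a map $V_\nil\to V_\nil$, $B_{12}:V_\inv\to V_\nil$, $B_{21}:V_\nil\to V_\inv$, and $B_{22}:V_\inv\to V_\inv$ (being careful about the row-vector convention, but this is cosmetic). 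The equation $AB-\lambda BA=0$ then breaks into four independent block equations, one for each $(i,j)$.

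**The diagonal blocks.** For the $(1,1)$ block I get $A_\nil B_{11}-\lambda B_{11}A_\nil=0$, i.e. $B_{11}\in C_F(A_\nil,\lambda)$; similarly the $(2,2)$ block gives $B_{22}\in C_F(A_\inv,\lambda)$. These two conditions are exactly what appears on the right-hand side of the claimed isomorphism, so the whole content of the theorem is that the off-diagonal blocks $B_{12}$ and $B_{21}$ are forced to be zero. The plan is to prove precisely that: if $AB-\lambda BA=0$ then $B_{12}=0$ and $B_{21}=0$, and conversely that the pair $(B_{11},B_{22})$ ranges freely over $C_F(A_\nil,\lambda)\times C_F(A_\inv,\lambda)$. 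The converse direction is immediate once the forward direction is known, since a block-diagonal $B=B_{11}\oplus B_{22}$ with $B_{11}\in C_F(A_\nil,\lambda)$, $B_{22}\in C_F(A_\inv,\lambda)$ visibly satisfies all four block equations; and the map $B\mapsto(B_{11},B_{22})$ is then an $F$-linear bijection $C_F(A,\lambda)\to C_F(A_\nil,\lambda)\oplus C_F(A_\inv,\lambda)$.

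**Killing the off-diagonal blocks — the main point.** The $(2,1)$ block equation reads $A_\inv B_{21}-\lambda B_{21}A_\nil=0$, i.e. $A_\inv B_{21}=\lambda B_{21}A_\nil$. Iterating, $A_\inv^k B_{21}=\lambda^k B_{21}A_\nil^k$ for every $k\ge0$. Taking $k=n$ and using $A_\nil^n=0$ gives $A_\inv^n B_{21}=0$; since $A_\inv$ is invertible, so is $A_\inv^n$, hence $B_{21}=0$. The $(1,2)$ block is symmetric but needs $\lambda\ne0$: the equation is $A_\nil B_{12}-\lambda B_{12}A_\inv=0$, i.e. $B_{12}A_\inv=\lambda^{-1}A_\nil B_{12}$, so $B_{12}A_\inv^k=\lambda^{-k}A_\nil^k B_{12}$ and with $k=n$ we get $B_{12}A_\inv^n=0$, whence $B_{12}=0$ again because $A_\inv^n$ is invertible. (This is the one place the hypothesis $\lambda\ne0$ is used; when $\lambda=0$ the $(1,2)$ block need not vanish, consistent with Lemma~\ref{L:k0}.) I expect this "iterate the commutation relation and use nilpotence" step to be the crux; it is short, but it is the only non-formal ingredient. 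Everything else is bookkeeping with the $2\times2$ block structure and linearity. One small thing to state cleanly is that the various block spaces $F^{n_\nil\times n_\nil}$, $F^{n_\inv\times n_\inv}$ (with $n_\nil=\dim V_\nil$, $n_\inv=\dim V_\inv$) are what $C_F(A_\nil,\lambda)$ and $C_F(A_\inv,\lambda)$ live in, so that the direct sum on the right-hand side makes sense as stated.
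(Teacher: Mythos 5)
Your proposal is correct and follows essentially the same route as the paper: conjugate $A$ to the block-diagonal form $A_\nil\oplus A_\inv$, split $B$ into $2\times 2$ blocks, read off that the diagonal blocks lie in $C_F(A_\nil,\lambda)$ and $C_F(A_\inv,\lambda)$, and show the off-diagonal blocks vanish by iterating the commutation relation to power $n$ and invoking $A_\nil^n=0$ together with the invertibility of $A_\inv$. The only (cosmetic) difference is that you iterate $A_\inv^k B_{21}=\lambda^k B_{21}A_\nil^k$ rather than first solving for $B_{21}$ and conjugating by $A_\inv^{-1}$ as the paper does, which makes the $(2,1)$ case slightly cleaner; your observation that $\lambda\ne0$ is needed only for the $(1,2)$ block is accurate and matches the role of Lemma~\ref{L:k0}.
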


\begin{proof}
By replacing $A$ by a conjugate of itself, we may assume that
$A=\left(\begin{smallmatrix}A_1&0\\0&A_2\end{smallmatrix}\right)$
where $A_1:=A_\nil$ and $A_2:=A_\inv$.
Now $B=\left(\begin{smallmatrix}B_1&B_2\\B_3&B_4\end{smallmatrix}\right)$
lies in $C_F(A,\lambda)$ precisely when
\begin{equation}\label{E:}
  AB-\lambda BA
  =\begin{pmatrix}A_1B_1-\lambda B_1A_1&A_1B_2-\lambda B_2A_2\\
  A_2B_3-\lambda B_3A_1&A_2B_4-\lambda B_4A_2\end{pmatrix}
  =\begin{pmatrix}0&0\\0&0\end{pmatrix}.
\end{equation}
Equating blocks give the following four conditions: $B_1\in C_F(A_1,\lambda)$,
$B_4\in C_F(A_2,\lambda)$, 
\[
  B_2=\lambda^{-1}A_1B_2A_2^{-1}\quad\textup{and}\quad
  B_3=\lambda A_2^{-1}B_3A_1.
\]
Iterating the equation $B_2=\lambda^{-1}A_1B_2A_2^{-1}$ gives
$B_2=\lambda^{-n}A_1^nB_2A_2^{-n}$. Since $A_1^n=0$, we see that $B_2=0$.
A similar calculation shows that $B_3=0$.
Therefore $(B_1,B_4)\mapsto\textup{diag}(B_1,B_4)$ defines a (distance-preserving) isomorphism from
$C_F(A_1,\lambda)\oplus C_F(A_2,\lambda)$ to $C_F(A,\lambda)$.
\end{proof}

\begin{remark}
Adapting the proof of Theorem~\ref{T:decom}
shows that $C_F(A,0)$ is independent of $A_\inv$;
more precisely $C_F(A,0)\cong \{X\in F^{m_0\times n}\mid A_\nil X=0\}$
where $A$ has order $m_0$, (and as in Lemma~\ref{L:k0}, $C_F(A,0)$
has dimension $k_0n$ where $A$ has nullity $k_0$).
\end{remark}

It is natural to ask whether $C_F(A_\inv,\lambda)$ can be further decomposed.
We identify the \emph{general linear} group $\GL(F^n)$ with the
group $\GL(n,F)$ of $n\times n$ invertible matrices over~$F$.
The following is an isomorphism of codes, in particular it
preserves Hamming distances. 

\begin{theorem}\label{T:decom2}
Suppose $A\in F^{n\times n}$, $0\ne\lambda\in F$, and $\textup{char}(F)=p>0$.
Suppose $c_A(t)$ factors over $F$ as $(t-\alpha)^mf(t)$ where
$f(\alpha)f(\lambda\alpha)f(\lambda^{-1}\alpha)\ne0$. Then $A$ is conjugate to
$\textup{diag}(A_1,A_2)$ where $c_{A_1}(t)=(t-\alpha)^m$, $c_{A_2}(t)=f(t)$, and
\begin{equation}\label{E:decom2}
  C_F(A,\lambda)\cong C_F(A_1,\lambda)\oplus C_F(A_2,\lambda).
\end{equation}
\end{theorem}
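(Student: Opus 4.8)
The plan is to imitate the block-decomposition argument of Theorem~\ref{T:decom}, but now splitting $V=F^n$ according to the $(t-\alpha)$-primary component and its complement rather than the nilpotent/invertible parts. First I would set $V_1:=\ker_V((A-\alpha I)^n)$ (the $(t-\alpha)$-primary $F[A]$-submodule, of dimension $m$ since $(t-\alpha)^m$ is the $(t-\alpha)$-part of $c_A(t)$) and $V_2:=\operatorname{im}_V((A-\alpha I)^n)$; because $(t-\alpha)^m$ and $f(t)$ are coprime, $V=V_1\oplus V_2$ is an $F[A]$-module decomposition, and $A$ is conjugate to $\operatorname{diag}(A_1,A_2)$ with $c_{A_1}(t)=(t-\alpha)^m$, $c_{A_2}(t)=f(t)$. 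Writing $B=\left(\begin{smallmatrix}B_1&B_2\\B_3&B_4\end{smallmatrix}\right)$ conformally, the condition $AB-\lambda BA=0$ decouples into $B_1\in C_F(A_1,\lambda)$, $B_4\in C_F(A_2,\lambda)$, together with the two ``off-diagonal'' equations $A_1B_2=\lambda B_2A_2$ and $A_2B_3=\lambda B_3A_1$. So \eqref{E:decom2} reduces to showing $B_2=0$ and $B_3=0$.

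The key point—and the place where the hypothesis $f(\alpha)f(\lambda\alpha)f(\lambda^{-1}\alpha)\ne0$ enters—is the vanishing of these off-diagonal blocks. Consider $B_2$ as a linear map $V_2\to V_1$ (in whatever direction the convention dictates) intertwining $A_2$ up to the twist $\lambda$: from $A_1B_2=\lambda B_2A_2$ one gets $(A_1-\lambda\mu I)B_2 = \lambda B_2(A_2-\mu I)$ for every scalar $\mu$, hence more generally $g(A_1)B_2 = \lambda^{\deg g}B_2\, g(\lambda^{-1}A_2)$... more cleanly, one shows by induction that $(A_1-\alpha I)^kB_2$ lies in the image of $B_2\cdot p_k(A_2)$ for suitable polynomials, and then exploits that $A_1-\alpha I$ is nilpotent on $V_1$ (order $\le m\le n$) to push a high power through. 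Concretely: iterating $A_1 B_2 = \lambda B_2 A_2$ with the substitution $A_1 = (A_1-\alpha I)+\alpha I$ yields $(A_1-\alpha I)B_2 = B_2(\lambda A_2-\alpha I)=\lambda B_2(A_2-\lambda^{-1}\alpha I)$, so $0=(A_1-\alpha I)^nB_2=\lambda^n B_2(A_2-\lambda^{-1}\alpha I)^n$; but $A_2$ has characteristic polynomial $f$, and $f(\lambda^{-1}\alpha)\ne0$ means $(A_2-\lambda^{-1}\alpha I)$ is invertible on $V_2$, forcing $B_2=0$. The symmetric computation with $A_2B_3=\lambda B_3A_1$ gives $(A_1-\lambda^{-1}\alpha I)^n$ or rather $(A_1-\alpha I)^n$ acting on the other side and uses $f(\lambda\alpha)\ne0$ (together with $f(\alpha)\ne0$ to guarantee $\alpha\notin S(A_2)$, i.e.\ that $V_1$ really is the full $(t-\alpha)$-primary part) to conclude $B_3=0$.

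Finally, having $B_2=B_3=0$, the map $(B_1,B_4)\mapsto\operatorname{diag}(B_1,B_4)$ is visibly an $F$-linear isomorphism $C_F(A_1,\lambda)\oplus C_F(A_2,\lambda)\to C_F(A,\lambda)$, proving~\eqref{E:decom2}. The role of the hypothesis $\operatorname{char}(F)=p>0$ is worth isolating: it is \emph{not} needed for the block argument above, so I expect it is used to ensure that the primary decomposition behaves well when $\alpha,\lambda\alpha,\lambda^{-1}\alpha$ are related by the Frobenius—for instance to rule out the degenerate coincidence $\lambda^{-1}\alpha=\alpha$ (i.e.\ $\lambda=1$) interacting badly, or, more likely, it is inherited from the standing hypotheses of Section~\ref{S:nilinv} and used to guarantee separability so that $(t-\alpha)^m$ is genuinely a direct factor of $c_A(t)$ over $F$. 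The main obstacle is therefore bookkeeping rather than depth: pinning down the exact induction that transfers a power of the nilpotent operator $A_1-\alpha I$ across $B_2$ (resp.\ $B_3$) into an invertible operator on $V_2$, and verifying that all three non-vanishing conditions $f(\alpha)\ne0$, $f(\lambda\alpha)\ne0$, $f(\lambda^{-1}\alpha)\ne0$ are each genuinely invoked—$f(\alpha)\ne0$ for the primary decomposition, and $f(\lambda^{\pm1}\alpha)\ne0$ for the two off-diagonal vanishings.
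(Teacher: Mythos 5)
Your proposal is correct, and in the key step it takes a genuinely different (and sharper) route than the paper.  The paper reduces first to $\alpha\ne 0$ and then, via Theorem~\ref{T:decom}, to the case where $A$ is invertible; it then iterates $B_3=\lambda^k A_2^{-k}B_3A_1^k$, chooses $k=p^m\ge n$ so that the freshman's dream gives $A_1^k=\alpha^k I$, concludes $(\lambda\alpha)^k\in S(A_2)^k$, and extracts $k$th roots --- both of which genuinely require $\mathrm{char}(F)=p>0$.  You instead pass $(A_1-\alpha I)$ directly across the off-diagonal block: from $A_1B_2=\lambda B_2A_2$ you get $(A_1-\alpha I)B_2=\lambda B_2(A_2-\lambda^{-1}\alpha I)$ and hence, by induction, $0=(A_1-\alpha I)^nB_2=\lambda^nB_2(A_2-\lambda^{-1}\alpha I)^n$; since $f(\lambda^{-1}\alpha)\ne 0$ makes $A_2-\lambda^{-1}\alpha I$ invertible, $B_2=0$, and symmetrically $B_3(A_1-\alpha I)=\lambda^{-1}(A_2-\lambda\alpha I)B_3$ together with $f(\lambda\alpha)\ne 0$ forces $B_3=0$.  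This is cleaner: it avoids the preliminary reductions and never uses the characteristic, so it actually proves the theorem over an arbitrary field, answering the question you raised about where $\mathrm{char}(F)=p>0$ enters --- it is an artefact of the paper's Frobenius-based argument, not of the statement.  Your description of the $B_3$ case is slightly garbled in the write-up (``$(A_1-\lambda^{-1}\alpha I)^n$ or rather $(A_1-\alpha I)^n$''), and to be safe you should record explicitly that $f(\alpha)\ne 0$ is what makes $(t-\alpha)^m$ and $f(t)$ coprime so that $V=V_1\oplus V_2$ and $c_{A_2}(t)=f(t)$; once those small points are tidied the argument is complete.
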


\begin{proof}
Write $V=V_1\oplus V_2$ where $V_1=\{v\in V\mid v(A-\alpha I)^n=0\}$ and
$V_2=V(A-\alpha I)^n$. Then $A$ is conjugate to
$\textup{diag}(A_1,A_2)$ where $A_1\in\GL(V_1)$, $A_2\in\GL(V_2)$ and
$c_{A_2}(t)=f(t)$ is coprime to $c_{A_1}(t)=(t-\alpha)^m$.
By Theorem~\ref{T:decom} we may assume that $\alpha\ne0$,
and hence~$A_1$ is invertible. Since $A$ is conjugate to
$\textup{diag}(A_1\oplus(A_2)_\inv,(A_2)_\nil)$ where
$A_\inv=A_1\oplus (A_2)_\inv$ and $A_\nil=(A_2)_\nil$,
it suffices by Theorem~\ref{T:decom}
to prove~\eqref{E:decom2} when $A$ is invertible. Assume this is so.

Suppose $B\in C_F(A,\lambda)$, and write 
$B=\left(\begin{smallmatrix}B_1&B_2\\B_3&B_4\end{smallmatrix}\right)$
as in the proof of Theorem~\ref{T:decom}.
It follows from~\eqref{E:} (and the discussion following it) that 
$B_1\in C_F(A_1,\lambda)$, $B_4\in C_F(A_2,\lambda)$, and 
$B_3=\lambda^k A_2^{-k}B_3A_1^k$ for all $k\ge0$.
Take $k=p^m$ where $m=\lceil\log_p(n)\rceil$. Since $k\ge n$ and
$(A_1-\alpha I)^n=0$ we have $0=(A_1-\alpha I)^k=A_1^k-\alpha^k I$.
Therefore $B_3=(\lambda\alpha)^k A_2^{-k}B_3$. Suppose that this equation
has a solution $B_3\ne0$. Then $(\lambda\alpha)^k A_2^{-k}-I$ is singular,
and so too is $(\lambda\alpha)^kI- A_2^k$. This implies that
$(\lambda\alpha)^k$ is an eigenvalue of~$A_2^k$. Therefore 
$(\lambda\alpha)^k\in S(A_2^k)=S(A_2)^k$ where $S(A_2)^k$ denotes the
$k$th powers of the eigenvalues of~$A_2$. Taking $k$th roots
(which are unique in characteristic~$p$ as $k=p^m$) shows that
$\lambda\alpha\in S(A_2)$ and so
$0=c_{A_2}(\lambda\alpha)=f(\lambda\alpha)$, a contradiction. Thus the only solution is $B_3=0$.
Similar reasoning shows that $B_2=\lambda^{-k} A_1^{-k}B_2A_2^k$.
If $B_2\ne0$, then $(\lambda\alpha^{-1})^kA_2^k-I$ is singular and so too
is $(\lambda^{-1}\alpha)^kI-A_2^k$. This implies that $f(\lambda^{-1}\alpha)=0$,
contrary to our assumption. Hence $B_2=0$, and $B\mapsto (B_1,B_4)$
defines an $F$-linear isomorphism
$C_F(A,\lambda)\cong C_F(A_1,\lambda)\oplus C_F(A_2,\lambda)$.
\end{proof}

The following lemma is proved in~\cite[Proposition~2.1]{TCC} and used
for Theorem~\ref{T:NilpDim}.

\begin{lemma}\label{L:ExistInvertible}
If $A\in F^{n\times n}$, $\lambda\in F$ and $C_F(A,\lambda)$ contains an invertible matrix,~then
\[
  \dim C_F(A,\lambda)=\dim C_F(A).
\]
\end{lemma}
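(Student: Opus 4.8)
The plan is to exploit the fact, already emphasised in the introduction, that $C_F(A,\lambda)$ is a module for the $F$-algebra $C_F(A)$, together with the observation that multiplication by an invertible element of $C_F(A,\lambda)$ turns one of these spaces into the other. Concretely, suppose $B_0\in C_F(A,\lambda)$ is invertible. First I would check that for any $X\in C_F(A)$ the product $B_0X$ lies in $C_F(A,\lambda)$: indeed $AX=XA$ gives $A(B_0X)-\lambda(B_0X)A = (AB_0)X-\lambda B_0(AX) = (AB_0-\lambda B_0 A)X = 0$, using $AB_0=\lambda B_0A$. Thus the map $\varphi\colon C_F(A)\to C_F(A,\lambda)$, $X\mapsto B_0X$, is a well-defined $F$-linear map, and it is injective because $B_0$ is invertible.

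Next I would argue surjectivity, which is the only step requiring a small idea. Given $B\in C_F(A,\lambda)$, set $X:=B_0^{-1}B$; I claim $X\in C_F(A)$. From $AB_0=\lambda B_0A$ we get $B_0^{-1}A = \lambda A B_0^{-1}$ (multiply on both sides by $B_0^{-1}$ and divide by $\lambda\ne0$ — note $B_0$ invertible forces $\lambda\ne0$, since $AB_0B_0^{-1}=\lambda B_0AB_0^{-1}$ shows $A$ is conjugate to $\lambda A$, impossible for $\lambda=0$ unless $A=0$, and if $A=0$ the statement is trivial). Then
\[
  AX - XA = A B_0^{-1}B - B_0^{-1}BA
          = \lambda B_0^{-1}AB - B_0^{-1}BA
          = B_0^{-1}(\lambda AB - BA)
          = B_0^{-1}\cdot\lambda^{-1}\lambda(\lambda AB-BA).
\]
Hmm, more cleanly: from $B\in C_F(A,\lambda)$ we have $AB=\lambda BA$, so $\lambda B_0^{-1}AB = \lambda^2 B_0^{-1}BA$; combined with $AB_0^{-1}=\lambda^{-1}B_0^{-1}A$ this gives $AX = AB_0^{-1}B = \lambda^{-1}B_0^{-1}AB = \lambda^{-1}B_0^{-1}\lambda BA = B_0^{-1}BA = XA$. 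Hence $X\in C_F(A)$ and $\varphi(X)=B_0X=B$, so $\varphi$ is an isomorphism and $\dim C_F(A,\lambda)=\dim C_F(A)$.

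The main obstacle — such as it is — is bookkeeping the direction of the twisting relation $AB_0^{-1}=\lambda^{-1}B_0^{-1}A$ correctly when passing to the inverse, and noticing that the existence of an invertible element already rules out the degenerate case $\lambda=0$ with $A\neq 0$; once those points are handled the argument is a one-line linear isomorphism. I would also remark that the same map shows $C_F(A,\lambda)\cong C_F(A)$ as $C_F(A)$-modules (right multiplication is what is used), which is the form in which the result is applied in Theorem~\ref{T:NilpDim}. Since the paper cites \cite[Proposition~2.1]{TCC} for this lemma, I would keep the write-up to essentially these few lines rather than elaborate further.
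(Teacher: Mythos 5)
Your argument is correct. You observe that left-multiplication by the invertible element $B_0\in C_F(A,\lambda)$ sends $C_F(A)$ into $C_F(A,\lambda)$, is injective, and is surjective because $B_0^{-1}B\in C_F(A)$ whenever $B\in C_F(A,\lambda)$ (after deducing $AB_0^{-1}=\lambda^{-1}B_0^{-1}A$ from $AB_0=\lambda B_0A$); the degenerate case $\lambda=0$ is correctly dispatched since an invertible $B_0$ with $AB_0=0$ forces $A=0$. This is the natural linear-isomorphism argument, and since the paper itself gives no proof but merely cites \cite[Proposition~2.1]{TCC}, your write-up is a self-contained version of what that reference must contain; the only blemish is the first, abandoned computation of $AX-XA$, which uses the wrong power of $\lambda$ before you correct it — you should simply delete that false start and keep only the clean derivation $AX=AB_0^{-1}B=\lambda^{-1}B_0^{-1}AB=\lambda^{-1}B_0^{-1}(\lambda BA)=XA$.
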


Let $J_\mu$ denote the $\mu\times\mu$ matrix with $(i,i+1)$ entry 1
for $1\le i<\mu$,
and zeros elsewhere. A nilpotent matrix $A\in F^{n\times n}$ has
\emph{Jordan form}
$\bigoplus_{i=1}^k J_{\mu_i}$ for some $k\ge1$ and some partition
$\mu=(\mu_1,\dots,\mu_k)$ of~$n$ with $\mu_1\ge\cdots\ge\mu_k\ge1$ and
$\sum_{i=1}^k \mu_i=n$. It is convenient to represent partitions by their
Young diagrams as in Figure~\ref{F:Young}.
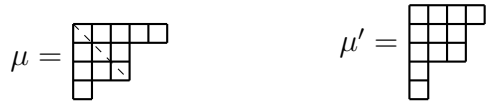
\begin{figure}[!ht]
  \caption{Young diagrams for $\mu=(5,3,3,1)$ and
    $\mu'=(4,3,3,1,1)$.}\label{F:Young}
  \begin{center}
  \begin{tikzpicture}[scale=0.25]
    \coordinate [label=left:\textcolor{black}{$\mu=$}] (A) at (0,2);
    \draw[thick,black](0,0)--(0,4);
    \draw[thick,black](1,0)--(1,4);\draw[thick,black](2,1)--(2,4);
    \draw[thick,black](3,1)--(3,4);
    \draw[thick,black](4,3)--(4,4);\draw[thick,black](5,3)--(5,4);
    \draw[thick,black](0,0)--(1,0);\draw[thick,black](0,1)--(3,1);
    \draw[thick,black](0,2)--(3,2);\draw[thick,black](0,3)--(5,3);
    \draw[thick,black](0,4)--(5,4);\draw[dashed] (0,4)--(3,1);
  \end{tikzpicture}
  \hskip20mm
  \begin{tikzpicture}[scale=0.25]
    \coordinate [label=left:\textcolor{black}{$\mu'=$}] (A) at (0,3);
    \draw[thick,black](0,0)--(0,5);\draw[thick,black](1,0)--(1,5);
    \draw[thick,black](2,2)--(2,5);\draw[thick,black](3,2)--(3,5);
    \draw[thick,black](4,4)--(4,5);
    \draw[thick,black](0,0)--(1,0);\draw[thick,black](0,1)--(1,1);
    \draw[thick,black](0,2)--(3,2);\draw[thick,black](0,3)--(3,3);
    \draw[thick,black](0,4)--(4,4);\draw[thick,black](0,5)--(4,5);
  \end{tikzpicture}
  \end{center}
\end{figure}
Reflecting the Young diagram of $\mu$ about its
main diagonal gives the Young diagram of the \emph{conjugate} partition
denoted $\mu'=(\mu'_1,\dots,\mu'_{k'})$, see Figure~\ref{F:Young}. Note that
$\mu'_i=|\{j\mid \mu_j\ge i\}|$ and  $k'=\mu_1$. If $A$ is conjugate to
$\bigoplus_{i=1}^k J_{\mu_i}$, then it follows
from~\cite[Theorem~6.1.3]{CM} that $\dim C_F(A) =\sum_{i=1}^{k'} (\mu'_i)^2$.
Further, $k=\mu'_1=k_0$ is the dimension of the null space of~$A$.

\begin{theorem}\label{T:NilpDim}
Suppose $A\in F^{n\times n}$ is nilpotent with nullity $k_0$ and
$\bigoplus_{i=1}^{k_0} J_{\mu_i}$ is its Jordan form where
$\sum_{i=1}^{k_0} \mu_i=n$. If $0\ne\lambda\in F$, then
$\dim C_F(A,\lambda)=\sum_{i=1}^{\mu_1} (\mu'_i)^2\ge k_0^2$.
\end{theorem}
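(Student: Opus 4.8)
The plan is to reduce everything to the untwisted case $\lambda=1$, whose dimension formula $\dim C_F(A)=\sum_{i=1}^{\mu_1}(\mu'_i)^2$ was recorded above (from \cite[Theorem~6.1.3]{CM}, using $k'=\mu_1$). So the real content of the theorem is the identity $\dim C_F(A,\lambda)=\dim C_F(A)$ for nilpotent $A$ and $\lambda\ne0$, after which the displayed inequality is trivial.

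To establish this identity I would invoke Lemma~\ref{L:ExistInvertible}: it suffices to exhibit a single invertible matrix $B\in C_F(A,\lambda)$. An invertible $B$ satisfies $AB=\lambda BA$ precisely when $B^{-1}AB=\lambda A$, that is, precisely when $A$ and $\lambda A$ are conjugate in $\GL(n,F)$. This is exactly where nilpotence enters: for $\lambda\ne0$ the matrix $\lambda A$ is nilpotent with the same Jordan type as $A$. It is enough to check this one Jordan block at a time, and $\lambda J_\mu$ is conjugated to $J_\mu$ by the diagonal matrix $\textup{diag}(1,\lambda^{-1},\dots,\lambda^{-(\mu-1)})\in\GL(\mu,F)$. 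Hence $A$ and $\lambda A$ are conjugate over $F$, the required invertible $B$ exists, and Lemma~\ref{L:ExistInvertible} gives $\dim C_F(A,\lambda)=\dim C_F(A)=\sum_{i=1}^{\mu_1}(\mu'_i)^2$.

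Finally, I would note that $\mu'_1=|\{j\mid\mu_j\ge1\}|$ is the number of Jordan blocks of $A$, which equals the nullity $k_0$; discarding all terms with $i\ge2$ then yields $\sum_{i=1}^{\mu_1}(\mu'_i)^2\ge(\mu'_1)^2=k_0^2$, completing the proof.

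I do not anticipate a genuine obstacle here; the only points needing a little care are that the conjugating matrix must have entries in $F$ rather than in $\overline{F}$ (which is automatic since $\lambda\in F$), and that the argument genuinely needs $A$ nilpotent, since a general $A$ need not be conjugate to $\lambda A$. If one wishes to avoid Lemma~\ref{L:ExistInvertible} altogether, the same invertible $P$ with $P^{-1}AP=\lambda A$ furnishes an explicit $F$-linear bijection $C_F(A)\to C_F(A,\lambda)$, $C\mapsto CP$, giving the dimension equality directly.
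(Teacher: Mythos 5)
Your proposal is correct and follows essentially the same route as the paper: both reduce to the $\lambda=1$ case via Lemma~\ref{L:ExistInvertible} and the formula from \cite[Theorem~6.1.3]{CM}, and both exhibit the required invertible element of $C_F(A,\lambda)$ as a block-diagonal matrix with blocks $\mathrm{diag}(1,\lambda,\dots,\lambda^{\mu_i-1})$ (your conjugating matrix witnessing $A\sim\lambda A$ is just its inverse). The only cosmetic difference is that you phrase the construction via the observation that an invertible $B\in C_F(A,\lambda)$ is the same thing as a matrix conjugating $A$ to $\lambda A$, whereas the paper verifies $J_{\mu_i}D_{\mu_i}=\lambda D_{\mu_i}J_{\mu_i}$ directly.
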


\begin{proof}
As mentioned above, $A$ is conjugate in $\GL(n,F)$ to
$J=\bigoplus_{i=1}^{k_0} J_{\mu_i}$ and $k_0=\mu'_1$.
We now argue that
$C_F(A,\lambda)$ contains an invertible matrix. To see this note that the
invertible matrix
$D_{\mu_i}:=\textup{diag}(1,\lambda,\dots,\lambda^{\mu_i-1})$ lies in
$C_F(J_{\mu_i},\lambda)$. Thus $\bigoplus_{i=1}^{k_0} D_{\mu_i}$ is an
invertible element of $C_F(J,\lambda)$.
By Lemma~\ref{L:ExistInvertible} we have
$\dim C_F(A,\lambda)=\dim C_F(A)$, and as remarked before this theorem,
$\dim C_F(A) =\sum_{i=1}^{\mu_1} (\mu'_i)^2$ by~\cite[Theorem~6.1.3]{CM}.
\end{proof}

\begin{corollary}\label{C:det}
If $A\in F^{n\times n}$ is singular and $\lambda\in F$, then
$C_F(A,\lambda)\ne\{0\}$.
\end{corollary}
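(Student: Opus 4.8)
The plan is to treat the cases $\lambda=0$ and $\lambda\neq 0$ separately, exhibiting in each a nonzero element (or positive dimension) of $C_F(A,\lambda)$. If $\lambda=0$, then Lemma~\ref{L:k0} already gives $\dim C_F(A,0)=k_0n$, where $k_0$ is the nullity of $A$; since $A$ is singular we have $k_0\ge 1$, so $\dim C_F(A,0)\ge n\ge 1$ and in particular $C_F(A,0)\neq\{0\}$.

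If $\lambda\neq 0$, I would invoke Theorem~\ref{T:decom} to obtain $C_F(A,\lambda)\cong C_F(A_\nil,\lambda)\oplus C_F(A_\inv,\lambda)$, so that it suffices to show the first summand is nonzero. Because $A$ is singular, $0$ is a root of $c_A(t)$, so the generalised null space $V_\nil$ is nonzero; hence $A_\nil$ is a genuine nilpotent matrix of size $m_0=\dim V_\nil\ge 1$ with nullity $k_0\ge 1$. Theorem~\ref{T:NilpDim} then yields $\dim C_F(A_\nil,\lambda)\ge k_0^2\ge 1$, so $C_F(A,\lambda)\neq\{0\}$.

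There is no real obstacle here: the only point requiring a moment's care is observing that singularity of $A$ forces $A_\nil$ to act on a nonzero space, so that Theorem~\ref{T:NilpDim} has content. Indeed one can bypass the case split entirely with a uniform construction: since the column and row nullities of $A$ coincide and $A$ is singular, pick nonzero vectors $w\in F^{n\times 1}$ and $v\in F^{1\times n}$ with $Aw=0$ and $vA=0$, and set $B=wv\in F^{n\times n}$; then $AB=(Aw)v=0$ and $BA=w(vA)=0$, so $AB-\lambda BA=0$ for every $\lambda\in F$, while $B\neq 0$ because $w$ and $v$ are nonzero. I would most likely record this rank-one argument as well, since it is cleaner, but the placement as a corollary makes the two-case deduction from Lemma~\ref{L:k0} and Theorems~\ref{T:decom} and~\ref{T:NilpDim} the natural write-up.
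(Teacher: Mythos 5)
Your two-case argument — Lemma~\ref{L:k0} for $\lambda=0$, and Theorem~\ref{T:decom} plus Theorem~\ref{T:NilpDim} for $\lambda\ne0$ — is exactly the paper's proof. The rank-one construction you add, taking nonzero $w$ and $v$ with $Aw=0$ and $vA=0$ and setting $B=wv$ so that $AB=BA=0$, is a genuinely different and more elementary route: it handles all $\lambda$ uniformly, avoids the nilpotent/invertible decomposition and the Jordan-form computation in Theorem~\ref{T:NilpDim} entirely, and in fact shows the slightly stronger statement that a singular $A$ admits a common nonzero element of $C_F(A,\lambda)$ for every $\lambda$ simultaneously, namely one lying in $C_F(A,0)\cap C_F(A,\lambda)$. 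What the paper's heavier machinery buys in exchange is the quantitative lower bound $\dim C_F(A,\lambda)\ge k_0^2$ (and the exact formula when $\lambda=0$), which the rank-one argument alone does not give.
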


\begin{proof}
Theorems~\ref{T:decom} and~\ref{T:NilpDim} deal with $\lambda\ne0$, and
Lemma~\ref{L:k0} deals with $\lambda=0$.
\end{proof}

Let $A\in F^{n\times n}$ have rank $\rk(A)$. Since the nullity of $A$ equals
$n-\rk(A)$, Lemma~\ref{L:k0} implies that $\dim C_F(A,0)=n(n-\rk(A))$.
When $\lambda\ne0$, we will prove
that $\dim C_F(A,\lambda)$ equals $(n-\rk(A))^2+\delta$ where
$0\le\delta\le\rk(A)^2$. 

\begin{theorem}\label{T:null2rk2}
If $0\ne\lambda\in F$ and $A\in F^{n\times n}$ has rank $r$, then
\[
  (n-r)^2\le\dim C_F(A,\lambda)\le (n-r)^2+r^2.
\]
\end{theorem}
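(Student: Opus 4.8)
The plan is to decompose $A$ using the nilpotent–invertible splitting and then bound each piece separately. By Theorem~\ref{T:decom}, $C_F(A,\lambda)\cong C_F(A_\nil,\lambda)\oplus C_F(A_\inv,\lambda)$, so $\dim C_F(A,\lambda)=\dim C_F(A_\nil,\lambda)+\dim C_F(A_\inv,\lambda)$. Write $m_0=\dim V_\nil$, so that $A_\nil$ is a nilpotent $m_0\times m_0$ matrix and $A_\inv$ is an invertible $(n-m_0)\times(n-m_0)$ matrix. Note that the nullity $k_0$ of $A$ equals the nullity of $A_\nil$, so $n-r=k_0$, and of course $k_0\le m_0\le n$.

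For the invertible part, I would first settle the extreme case where $A$ is itself invertible: then $r=n$, $k_0=0$, and the claimed bounds read $0\le\dim C_F(A,\lambda)\le n^2$, which is trivially true. In general, $A_\inv$ acts on an $(n-m_0)$-dimensional space, so $C_F(A_\inv,\lambda)$ is a subspace of $F^{(n-m_0)\times(n-m_0)}$ and hence has dimension at most $(n-m_0)^2\le r^2$; this handles the upper-bound contribution from the invertible part and contributes nothing to the lower bound.

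The crux is the nilpotent part. By Theorem~\ref{T:NilpDim}, $\dim C_F(A_\nil,\lambda)=\sum_{i=1}^{\mu_1}(\mu_i')^2$ where $\mu$ is the partition of $m_0$ giving the Jordan type of $A_\nil$ and $\mu_1'=k_0$. For the \emph{lower} bound I would use $\sum_i(\mu_i')^2\ge(\mu_1')^2=k_0^2=(n-r)^2$, exactly as stated at the end of Theorem~\ref{T:NilpDim}. For the \emph{upper} bound on the nilpotent contribution I need $\sum_{i=1}^{\mu_1}(\mu_i')^2\le m_0^2$; this follows because $\sum_i\mu_i'=m_0$ and each $\mu_i'\le m_0$, so $\sum_i(\mu_i')^2\le\bigl(\sum_i\mu_i'\bigr)^2=m_0^2$. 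Adding the two parts: the lower bound is $\dim C_F(A,\lambda)\ge k_0^2+0=(n-r)^2$, and the upper bound is $\dim C_F(A,\lambda)\le m_0^2+(n-m_0)^2$. It remains to check $m_0^2+(n-m_0)^2\le(n-r)^2+r^2$; writing $a=m_0$, $b=n-m_0$ and $c=n-r=k_0$, $d=r$, both pairs sum to $n$ and we have $c\le a$, $a+b=c+d=n$, so $d\le b$; the function $x\mapsto x^2+(n-x)^2$ is increasing for $x\ge n/2$ and decreasing for $x\le n/2$, and since $c\le a$ with $c+ a$-partner symmetry one checks $a^2+b^2\le c^2+d^2$ reduces to $(a-c)(a+c)\le(d-b)(d+b)$, i.e. $(a-c)(a+c)\le(a-c)(d+b)$ after using $d-b=a-c$; since $a-c\ge0$ it suffices that $a+c\le b+d=2n-(a+c)$, i.e. $a+c\le n$, which is $m_0+k_0\le n$ — but this is precisely (a consequence of) the hypothesis, or can be assumed without loss; in fact $k_0\le m_0$ and we may instead bound $\dim C_F(A_\nil,\lambda)\le$ something weaker if needed.

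The main obstacle I anticipate is making the final inequality $m_0^2+(n-m_0)^2\le(n-r)^2+r^2$ clean, since a priori only $k_0\le m_0$ is guaranteed rather than $m_0+k_0\le n$. The fix is to bound the nilpotent contribution more carefully: instead of $\sum(\mu_i')^2\le m_0^2$ I would note that every part $\mu_i'$ satisfies $\mu_i'\le\mu_1'=k_0$, hence $\sum_{i=1}^{\mu_1}(\mu_i')^2\le k_0\sum_{i=1}^{\mu_1}\mu_i'=k_0 m_0$, and then combine with $\dim C_F(A_\inv,\lambda)\le(n-m_0)^2$. One then wants $k_0 m_0+(n-m_0)^2\le k_0^2+r^2$; substituting $r=n-k_0$ (when $k_0\ge m_0$... ) — this still requires care, so the robust route is: bound $\dim C_F(A_\nil,\lambda)=\dim C_F(A_\nil)$ using the known formula and the elementary inequality $\sum(\mu_i')^2\le\bigl(\max_i\mu_i'\bigr)\bigl(\sum_i\mu_i'\bigr)+(\text{correction})$, or simply observe $\dim C_F(A_\nil)\le m_0^2$ always and $\dim C_F(A_\inv)\le r^2$ with $m_0=n-r$ exactly when $A_\inv$ is invertible of the complementary size — indeed $m_0=n-\rk(A_\inv)-\ldots$ is \emph{not} $n-r$ in general. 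I will therefore phrase the upper bound via $\dim C_F(A,\lambda)=\dim C_F(A_\nil,\lambda)+\dim C_F(A_\inv,\lambda)\le m_0^2+(n-m_0)^2$ and then reduce to verifying, for $k_0\le m_0\le n$, that $m_0^2+(n-m_0)^2\le k_0^2+(n-k_0)^2$, which holds because replacing $m_0$ by the smaller value $k_0$ moves the argument of $x\mapsto x^2+(n-x)^2$ toward (or past) its minimum at $n/2$ only when $m_0\le n/2$; the remaining case $m_0>n/2$ needs the sharper bound $\dim C_F(A_\nil,\lambda)\le k_0 m_0$ from above, after which $k_0 m_0+(n-m_0)^2\le k_0^2+(n-k_0)^2$ follows from $m_0\le n$ and $k_0\le m_0$ by a short factorization. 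Packaging these two regimes is the only real work.
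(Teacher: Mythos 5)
Your lower bound is fine and matches the paper exactly: both use $\dim C_F(A,\lambda)\ge\dim C_F(A_\nil,\lambda)=\sum_i(\mu_i')^2\ge(\mu_1')^2=k_0^2=(n-r)^2$, via Theorems~\ref{T:decom} and~\ref{T:NilpDim}.

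The upper bound, however, has a genuine gap. You split into the regimes $m_0\le n/2$ (where $m_0^2+(n-m_0)^2\le k_0^2+(n-k_0)^2$ by monotonicity, which is correct) and $m_0>n/2$, where you rely on $\dim C_F(A_\nil,\lambda)\le k_0m_0$ and claim $k_0m_0+(n-m_0)^2\le k_0^2+(n-k_0)^2$ ``by a short factorization.'' That inequality is false in general: after dividing by $m_0-k_0>0$ it reduces to $2k_0+m_0\le 2n$, which need not hold. Concretely, take $n=3$ with $A$ nilpotent of Jordan type $\mu=(2,1)$, so $\mu'=(2,1)$, $k_0=2$, $m_0=3$, $r=n-k_0=1$. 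Then $k_0m_0+(n-m_0)^2=6+0=6$, while $(n-r)^2+r^2=4+1=5$, so your bound gives $6\not\le5$ even though $\dim C_F(A,\lambda)=(\mu_1')^2+(\mu_2')^2=5$ actually does satisfy the theorem. The bound $k_0m_0$ on the nilpotent contribution is simply not sharp enough.

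The fix (and what the paper does) is to pull out the top term of the Jordan sum: write
\[
  \dim C_F(A_\nil,\lambda)=\sum_{i=1}^{\mu_1}(\mu_i')^2
  =k_0^2+\sum_{i=2}^{\mu_1}(\mu_i')^2
  \le k_0^2+\Bigl(\sum_{i=2}^{\mu_1}\mu_i'\Bigr)^2
  =k_0^2+(m_0-k_0)^2,
\]
since the $\mu_i'$ are nonnegative and $\sum_{i\ge2}\mu_i'=m_0-k_0$. Combined with $\dim C_F(A_\inv,\lambda)\le(n-m_0)^2$, this gives $\dim C_F(A,\lambda)\le k_0^2+(m_0-k_0)^2+(n-m_0)^2$, and the last two squared terms have nonnegative arguments summing to $n-k_0=r$, so $(m_0-k_0)^2+(n-m_0)^2\le r^2$. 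No case split on $m_0$ versus $n/2$ is needed, and the inequality is tight in the counterexample above.
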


\begin{proof}
The nullity of $A$ is $k_0=n-r$. If $A_\nil$ has Jordan form
$\bigoplus_{i=1}^{\mu'_1}J_{\mu_i}$, then $k_0=\mu'_1$ and
$\dim C_F(A_\nil,\lambda)=\sum_{i=1}^{\mu_1}(\mu'_i)^2$ by Theorem~\ref{T:NilpDim}.
The lower bound follows from Theorem~\ref{T:decom} since:
\[
  \dim C_F(A,\lambda)\ge\dim C_F(A_\nil,\lambda)=\sum_{i=1}^{\mu_1}(\mu'_i)^2
    \ge(\mu'_1)^2=(n-r)^2.
\]
Also $\sum_{i=1}^{\mu_1}\mu'_i=m_0$, since $A_\nil$ has order $m_0$.
Hence $m_0-k_0\le\sum_{i=2}^{\mu_1}(\mu'_i)^2\le (m_0-k_0)^2$. Now $A_\inv$ has
order $n-m_0$ so $\dim C_F(A_\inv,\lambda)\le (n-m_0)^2\le r^2$. Therefore
\[
  \dim C_F(A,\lambda)\le k_0^2+(m_0-k_0)^2+(n-m_0)^2
\]
by Theorem~\ref{T:decom}. The upper bound follows from
$(m_0-k_0)^2+(n-m_0)^2\le (n-k_0)^2=r^2$.
\end{proof}

\begin{remark}
If $\rk(A)$ is small, then the bounds in Theorem~\ref{T:null2rk2} are
close. However, when the $\rk(A)$ is large, the upper bound of $n^2/2$ in
Theorem~\ref{T:UB} is better. Note that $n^2/2<(n-r)^2+r^2$ unless $r=n/2$.
\end{remark}

\begin{remark}
When $\rk(A)=1$, we can compute $\dim C_F(A,\lambda)$ precisely.
Suppose $\dim C_F(A,\lambda)=(n-1)^2+\delta$. If $\lambda=0$, then
$\delta=n-1$ by Lemma~\ref{L:k0}. Suppose $\lambda\ne0$ and use
Theorem~\ref{T:decom}. If $A$ is nilpotent, then $\mu=(2,1,\dots,1)$ so
$\mu'=(n-1,1)$ and thus $\delta=1$ by Theorem~\ref{T:NilpDim}. Otherwise
$A\ne A_\nil$, so $A_\nil=0$ has order $n-1$ and $A_\inv$ has order~1.
Thus $\delta=\dim C_F(A_\inv,\lambda)$ equals 1 if $\lambda=1$, and
$\delta=0$ if $\lambda\not\in\{0,1\}$.
\end{remark}

\begin{corollary}\label{C:rank1}
Suppose $A\in F^{n\times n}$ has rank~$1$ and $\lambda\in F$. Then
$\dim C_F(A,\lambda)$ equals $n(n-1)$ if $\lambda=0$, equals
$(n-1)^2$ if $\lambda\not\in\{0,1\}$ and $\textup{Tr}(A)\ne0$, and
$(n-1)^2+1$ otherwise.
\end{corollary}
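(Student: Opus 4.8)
The plan is to assemble the result directly from the three preceding results: Lemma~\ref{L:k0} for the case $\lambda=0$, and Theorem~\ref{T:decom} together with Theorem~\ref{T:NilpDim} for $\lambda\ne0$, exactly as sketched in the Remark immediately above. First I would dispose of $\lambda=0$: by Lemma~\ref{L:k0}, $\dim C_F(A,0)=k_0n$ where $k_0$ is the nullity, and since $\rk(A)=1$ forces $k_0=n-1$, this gives $\dim C_F(A,0)=n(n-1)$.

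Next I would treat $\lambda\ne0$ using the decomposition $C_F(A,\lambda)\cong C_F(A_\nil,\lambda)\oplus C_F(A_\inv,\lambda)$ from Theorem~\ref{T:decom}. The key observation is that a rank-$1$ matrix has a very restricted Jordan/rational-form structure: the nilpotent part $A_\nil$ is either $0$ (of some order $m_0\le 1$, i.e. $A$ is already invertible up to the zero block) or has Jordan type $\mu=(2,1,\dots,1)$, and in either case $A_\inv$ has order at most $1$. I would split into the two subcases. If $A$ is nilpotent of rank $1$, then $\mu=(2,1,\dots,1)$ so the conjugate partition is $\mu'=(n-1,1)$, and Theorem~\ref{T:NilpDim} gives $\dim C_F(A,\lambda)=\sum_{i=1}^{2}(\mu'_i)^2=(n-1)^2+1$. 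If $A$ is not nilpotent, then $A_\nil=0$ acting on an $(n-1)$-dimensional space (contributing $(n-1)^2$ by Theorem~\ref{T:NilpDim} with $\mu=(1,\dots,1)$, or equivalently by noting $C_F(0_{n-1},\lambda)=F^{(n-1)\times(n-1)}$), while $A_\inv$ is $1\times 1$, say the scalar $\tau=\textup{Tr}(A)\ne0$; then $C_F(\tau,\lambda)=\{b\in F\mid \tau b-\lambda b\tau=0\}=\{b\mid (1-\lambda)\tau b=0\}$, which is all of $F$ if $\lambda=1$ and is $\{0\}$ if $\lambda\ne1$. Combining via Theorem~\ref{T:decom}, for $\lambda\not\in\{0,1\}$ and $\textup{Tr}(A)\ne0$ we get $(n-1)^2$, and otherwise (either $\lambda=1$, or $A$ nilpotent so $\textup{Tr}(A)=0$) we get $(n-1)^2+1$.

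The main thing to be careful about — though it is not a genuine obstacle — is correctly enumerating which rank-$1$ matrices are nilpotent and relating the nilpotent case to the trace condition: a rank-$1$ matrix $A$ is nilpotent precisely when $\textup{Tr}(A)=0$ (since its only possible nonzero eigenvalue is $\textup{Tr}(A)$), so the two bookkeeping cases "$A$ nilpotent" and "$\textup{Tr}(A)=0$" coincide, and the final "$(n-1)^2+1$ otherwise" cleanly absorbs both the $\lambda=1$ case and the $\textup{Tr}(A)=0$ case. I would also note that when $\lambda\ne0$ the value of $\dim C_F(A,\lambda)$ depends on $A$ only through its conjugacy class, which is exactly what licenses replacing $A$ by its block form $A_\nil\oplus A_\inv$; this is the standing fact emphasised in the introduction. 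No new estimates are needed: the corollary is a direct specialisation of Theorems~\ref{T:decom} and~\ref{T:NilpDim} and Lemma~\ref{L:k0}.
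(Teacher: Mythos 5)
Your proof is correct and follows essentially the same route as the paper: dispose of $\lambda=0$ via Lemma~\ref{L:k0}, then for $\lambda\ne0$ split on whether $\mathrm{Tr}(A)=0$ (equivalently, whether $A$ is nilpotent), use Theorem~\ref{T:decom} to reduce to $A_\nil\oplus A_\inv$, and apply Theorem~\ref{T:NilpDim} with $\mu=(2,1,\dots,1)$ or $\mu=(1,\dots,1)$ to compute the nilpotent contribution. The only cosmetic difference is that the paper phrases the case split via $c_A(t)=t^{n-1}(t-\alpha)$ with $\alpha=\mathrm{Tr}(A)$, whereas you phrase it as nilpotent versus non-nilpotent and then note the two descriptions coincide.
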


\begin{proof}
Since $A$ has nullity $n-1$, the dimension is $n(n-1)$ when $\lambda=0$
by Lemma~\ref{L:k0}. Suppose now that
$\lambda\ne0$. We have $c_A(t)=t^{n-1}(t-\alpha)$
where $\alpha=\textup{Trace}(A)$. If $\alpha\ne0$, then $A$ preserves the 
primary decomposition $V=V_0\oplus V_\alpha$ and $A$ is conjugate to
$A_\nil\oplus A_\inv$ where $A_\nil=0$ has order $n-1$ and
$A_\inv=(\alpha)$ has order $1$.
Thus $\dim C_F(A,\lambda)=(n-1)^2+\delta$ by Theorems~\ref{T:decom} where
$\delta=1$ if $\lambda=1$, and $\delta=0$ otherwise.
Finally, suppose that $\lambda\ne0$ and $\textup{Trace}(A)=0$.
In this case $A$ is nilpotent and its Jordan form corresponds to the partition
$\mu=(2,1,\dots,1)$ of~$n$. Since the conjugate partition is $\mu'=(n-1,1)$,
Theorem~\ref{T:NilpDim} gives $\dim C_F(A,\lambda)=(n-1)^2+1$.
\end{proof}

\section{Parity check matrices for \texorpdfstring{$C_F(A,\lambda)$}{}}
\label{S:Parity}

\begin{lemma}\label{L:FL}
Suppose $L$ is an extension field of $F$.
Then $C_L(A,\lambda)\cong C_F(A,\lambda)\otimes_F L$.
\end{lemma}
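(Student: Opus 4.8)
The plan is to exhibit $C_L(A,\lambda)$ as the solution space over $L$ of the same homogeneous linear system that defines $C_F(A,\lambda)$ over $F$, and then invoke the standard fact that the solution space of a linear system is compatible with field extension. Concretely, the condition $AB-\lambda BA=0$ on a matrix $B=(b_{ij})$ is, upon expanding, a system of $n^2$ homogeneous linear equations in the $n^2$ unknowns $b_{ij}$, with all coefficients lying in the prime subfield's extension generated by the entries of $A$ and $\lambda$ — in particular in $F$. Call this coefficient matrix $M\in F^{n^2\times n^2}$ (it is the matrix of the linear map $B\mapsto AB-\lambda BA$ on $F^{n\times n}$, e.g. $M=I\otimes A^{\mathsf T}-\lambda\, A\otimes I$ under a suitable vectorisation). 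Then $C_F(A,\lambda)=\ker_F(M)$ and $C_L(A,\lambda)=\ker_L(M)$, where in the latter we regard $M$ as a matrix over $L$ via $F\subseteq L$.

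The key step is then the elementary linear-algebra statement: for $M\in F^{m\times N}$ and any extension $L/F$, one has $\ker_L(M)=\ker_F(M)\otimes_F L$ as $L$-subspaces of $L^N$. I would justify this by noting that $\dim_F\ker_F(M)=N-\rk_F(M)$ and $\dim_L\ker_L(M)=N-\rk_L(M)$, and that the rank of a matrix is unchanged under field extension (row-reduction over $F$ produces a reduced form whose rank is read off from the pivots, and this reduction remains valid over $L$; equivalently, the largest nonvanishing minor is the same). Hence the two kernels have the same $L$-dimension after extending scalars, and since $\ker_F(M)\otimes_F L\subseteq\ker_L(M)$ obviously (an $F$-solution is an $L$-solution), equality of dimensions forces equality. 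Transporting this back through the identification of $F^{n\times n}$ with $F^{n^2}$ gives $C_L(A,\lambda)\cong C_F(A,\lambda)\otimes_F L$ as desired, and the isomorphism is natural: an $F$-basis of $C_F(A,\lambda)$ remains an $L$-basis of $C_L(A,\lambda)$.

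I do not anticipate a genuine obstacle here; the lemma is a packaging of the base-change compatibility of kernels. The only point requiring a little care is to make the identification $C_F(A,\lambda)\cong\ker_F(M)$ precise and functorial — i.e. to check that the vectorisation map $F^{n\times n}\to F^{n^2}$ intertwines the linear map $B\mapsto AB-\lambda BA$ with multiplication by $M$ — and then to observe that this same identification over $L$ uses the \emph{same} $M$ (because $A$ and $\lambda$ are defined over $F$). Everything else is the invariance of rank under extension of scalars, which I would either cite or dispatch in one line via minors.
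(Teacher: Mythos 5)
Your proposal is correct and takes essentially the same approach as the paper: both recast $AB-\lambda BA=0$ as a homogeneous linear system over $F$ and observe that the solution space is compatible with extending scalars from $F$ to $L$. The paper phrases this via free variables of the row-reduced system rather than via invariance of rank, but the substance is identical.
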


\begin{proof}
One may use linear algebra to compute $C_F(A,\lambda)$. View $B=(b_{ij})$ as having
$n^2$ indeterminate entries. Then $AB-\lambda BA=0$ gives a consistent system
of $n^2$ homogeneous linear equations over $F$. The system has
$d:=\dim C_F(A,\lambda)$ free variables. Letting the free
variables range (independently) over $F$ gives all elements of $C_F(A,\lambda)$,
and similarly letting the free variables range over $L$ gives all elements of $C_L(A,\lambda)$. If $B\in C_F(A,\lambda)$ and $\mu\in L$, then
$\mu B\in C_L(A,\lambda)$, and the map
$C_F(A,\lambda)\otimes_F L\to C_L(A,\lambda)$ with
$B\otimes \mu\mapsto \mu B$ is a (well-defined) isomorphism.
Thus $C_L(A,\lambda)\cong C_F(A,\lambda)\otimes_F L$, as claimed.
\end{proof}

Given $B=(b_{ij})$ in $F^{n\times n}$, let $\Vec{B}$ be the $1\times n^2$ row
vector obtained by concatenating the rows of $B$. Its $k$th entry is
$(\Vec{B})_k=b_{ij}$ where $i=\lceil k/n\rceil$ and $j=k-(i-1)n$. The
$F$-linear map $\Vec{\phantom{n}}\colon F^{n\times n}\to F^{n^2}$
with $B\mapsto\Vec{B}$ is a vector space isomorphism.

\begin{lemma}\label{L:RNS}
With the above notation $\{\Vec{B}\mid B\in C_F(A,\lambda)\}$ equals
the row null space of $H=A^t\otimes I_n- \lambda I_n\otimes A$.
\end{lemma}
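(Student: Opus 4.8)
The plan is to compute the matrix of the $F$-linear map $B\mapsto AB-\lambda BA$ acting on $F^{n\times n}$, using the $\Vec{\phantom{n}}$ identification $F^{n\times n}\cong F^{n^2}$ from the preceding paragraph, and then observe that $C_F(A,\lambda)$ is precisely the kernel of this map, which translates into the asserted row null space statement. The key identity is the standard ``vec trick'': for matrices $P,Q,X$ of compatible sizes one has $\Vec{PXQ}=\Vec{X}\,(P^t\otimes Q)$ when rows are concatenated (equivalently $\mathrm{vec}(PXQ)=(Q^t\otimes P)\mathrm{vec}(X)$ for the column-stacking convention; the row convention used here transposes the Kronecker factors and puts the vector on the left). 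I would state this identity, verify it by comparing the $(i,j)$-entry of $PXQ$ with the appropriate entry of $\Vec{X}(P^t\otimes Q)$ — a one-line index computation using $(\Vec{B})_{(i-1)n+j}=b_{ij}$ — and then apply it twice.

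Concretely, applying the identity with $P=A$, $Q=I_n$ gives $\Vec{AB}=\Vec{B}(A^t\otimes I_n)$, and with $P=I_n$, $Q=A$ gives $\Vec{BA}=\Vec{B}(I_n\otimes A)$. Hence
\[
  \Vec{AB-\lambda BA}=\Vec{B}\bigl(A^t\otimes I_n-\lambda\, I_n\otimes A\bigr)=\Vec{B}\,H.
\]
Since $B\in C_F(A,\lambda)$ if and only if $AB-\lambda BA=0$, and since $\Vec{\phantom{n}}$ is a vector space isomorphism (so $\Vec{X}=0\iff X=0$), we conclude that $B\in C_F(A,\lambda)$ if and only if $\Vec{B}\,H=0$, i.e. $\Vec{B}$ lies in the row null space of $H$. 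As $B$ ranges over $C_F(A,\lambda)$, the vectors $\Vec{B}$ range over exactly that row null space, which is the claim.

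The only genuine point requiring care — and the step I would expect to be the main (though modest) obstacle — is getting the Kronecker-product conventions and the transpose right, since the row-concatenation convention adopted here is the ``opposite'' of the more common column-stacking one, and an off-by-a-transpose error would swap $A^t\otimes I_n$ with $I_n\otimes A^t$ or similar. I would therefore pin this down by an explicit small index check: writing $B=\sum_{k,\ell}b_{k\ell}E_{k,\ell}$, compute $\Vec{A E_{k,\ell}}$ and $\Vec{E_{k,\ell}A}$ directly and match them against the rows of $A^t\otimes I_n$ and $I_n\otimes A$ respectively, using the block structure of the Kronecker product (the $((i-1)n+j,(k-1)n+\ell)$ entry of $P\otimes Q$ is $p_{ik}q_{j\ell}$). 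This verifies the displayed identity on basis elements, and linearity extends it to all $B$, completing the proof.
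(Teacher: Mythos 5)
Your proof is correct and follows essentially the same route as the paper: you establish $\Vec{B}H=\Vec{AB-\lambda BA}$ via the Kronecker-product ``vec'' identity verified by a direct index computation, and then conclude from the injectivity of $\Vec{\phantom{n}}$. The paper carries out exactly the index check you describe (using Einstein summation) to show $\Vec{B}(A^t\otimes I_n)=\Vec{AB}$ and $\Vec{B}(I_n\otimes A)=\Vec{BA}$, so the two arguments coincide in substance.
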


\begin{proof}
It suffices to prove that $\Vec{B}H=\Vec{AB-\lambda BA}$. A straightforward
calculation shows $\Vec{B}(\lambda I_n\otimes A)=\lambda\Vec{BA}$.
We now show that
$\Vec{B}(A^t\otimes I_n)=\Vec{AB}$. Suppose that $k=(i-1)n+j$ and
$k'=(i'-1)n+j'$.
Write $(I_n)_{ij}$ as $\delta_{ij}$. Then
\[
  (\Vec{B})_k=b_{ij},\quad (A\otimes I_n)_{kk'}=a_{ii'}\delta_{jj'},\quad\textup{and}\quad (A^t\otimes I_n)_{kk'}=a_{i'i}\delta_{jj'}.
\]
We use Einstein's convention that repeated subscripts 
are implicitly summed over to get
\[
  (\Vec{B}(A^t\otimes I_n))_{k'}=(\Vec{B})_{k}(A^t\otimes I_n)_{kk'}=b_{ij}a_{i'i}\delta_{jj'}
  =b_{ij'}a_{i'i}=a_{i'i}b_{ij'}=(AB)_{i'j'}.
\]
Therefore $\Vec{B}(A^t\otimes I_n)=\Vec{AB}$ and
$\Vec{B}(A^t\otimes I_n- \lambda I_n\otimes A)
  =\Vec{AB}-\lambda\Vec{BA}=\Vec{AB-\lambda BA}$.
\end{proof}

\begin{remark}
Lemma~\ref{L:RNS} can be paraphrased
$\Vec{C_F(A,\lambda)}=\textup{RNull}(A^t\otimes I_n- \lambda I_n\otimes A)$.
There is a corresponding result for column vectors.
For $B\in F^{n\times n}$ define $[B]\in F^{n^2\times 1}$ by $[B]^t=\Vec{B^t}$.
Transposing 
$\Vec{B}(A^t\otimes I_n- \lambda I_n\otimes A)=\Vec{0}$ gives
$(I_n\otimes A- \lambda A^t\otimes I_n)[B^t]=[0]$.
Therefore
$[C_F(A,\lambda)^t]=\textup{CNull}(I_n\otimes A- \lambda A^t\otimes I_n)$.
The matrix $H$ in Lemma~\ref{L:RNS} (and its transpose) are called
\emph{parity check matrices}. Although na\"{\i}vely checking whether 
$\Vec{B}H=\Vec{0}$ is computationally expensive (as $H$ is $n^2\times n^2$),
this equation is equivalent to
$AB-\lambda BA=0$ which can be computed at a cost (essentially)
of two multiplications of $n\times n$ matrices. 
\end{remark}

The eigenvalues of $H$ are related to $\lambda$ and the eigenvalues of $A$
as follows.

\begin{proposition}\label{P}
Suppose $A\in F^{n\times n}$, $\lambda\in F$ and $c_A(t)=\prod_{j=1}^n(t-\alpha_j)$
where the $\alpha_j\in\overline{F}$. Then
$H=A^t\otimes I_n -\lambda I_n\otimes A $ has characteristic polynomial
\[ 
  c_H(t)=\prod_{i=1}^n\prod_{j=1}^n(t-(\alpha_i-\lambda\alpha_j)).
\]
\end{proposition}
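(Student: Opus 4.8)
The plan is to extend scalars to the splitting field $L$, triangularize, and read the characteristic polynomial off the diagonal, exploiting that $H$ is a ``Kronecker sum'': $H=A^t\otimes I_n+I_n\otimes(-\lambda A)$. Since $c_H(t)$ has its coefficients in $F$, it is unaffected by conjugating $H$ by an invertible matrix over $L$ (or over $\overline{F}$), so this extension of scalars costs nothing.

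First I would record the commutation $(A^t\otimes I_n)(I_n\otimes A)=A^t\otimes A=(I_n\otimes A)(A^t\otimes I_n)$, so that $A^t\otimes I_n$ and $\lambda I_n\otimes A$ commute; this is what permits simultaneous triangularization. Because $c_A(t)=\prod_{j=1}^n(t-\alpha_j)$ splits over $L$, the matrix $A$ is conjugate over $L$ to an upper triangular matrix: there is $R\in\GL(n,L)$ with $R^{-1}AR=T$ upper triangular having diagonal $(\alpha_1,\dots,\alpha_n)$. Since $c_{A^t}(t)=\det\!\bigl((tI-A)^t\bigr)=\det(tI-A)=c_A(t)$ also splits over $L$, likewise $A^t$ is conjugate over $L$ to an upper triangular $S=P^{-1}A^tP$ whose diagonal is again the multiset $\{\alpha_1,\dots,\alpha_n\}$ (possibly in a different order, which will not matter).

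Conjugating $H$ by $P\otimes R$ then gives
\[
  (P\otimes R)^{-1}H(P\otimes R)=(P^{-1}A^tP)\otimes I_n-\lambda\,I_n\otimes(R^{-1}AR)=S\otimes I_n-\lambda\,I_n\otimes T.
\]
Now $S\otimes I_n$ is upper triangular (its $(i,j)$ block is $S_{ij}I_n$, which is zero for $i>j$, while each diagonal block $S_{ii}I_n$ is diagonal), and $I_n\otimes T$ is upper triangular (block diagonal with each diagonal block equal to the upper triangular $T$); hence so is their difference. Indexing rows and columns by $k=(i-1)n+j$ as in Lemma~\ref{L:RNS}, the $k$th diagonal entry of $S\otimes I_n-\lambda I_n\otimes T$ is $S_{ii}-\lambda T_{jj}$, and as $(i,j)$ ranges over $\{1,\dots,n\}^2$ these run over precisely the values $\alpha_i-\lambda\alpha_j$. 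Since the characteristic polynomial of an upper triangular matrix is the product of $t$ minus its diagonal entries, we conclude $c_H(t)=\prod_{i=1}^n\prod_{j=1}^n\bigl(t-(\alpha_i-\lambda\alpha_j)\bigr)$, as required.

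The only point requiring attention is the bookkeeping: that $c_{A^t}=c_A$ forces the diagonals of $S$ and $T$ to be the same multiset $\{\alpha_1,\dots,\alpha_n\}$ (so a possible reordering in $S$ is harmless), and that the index convention $k=(i-1)n+j$ correctly pairs the $i$th diagonal entry of $S$ with the $j$th of $T$. There is no substantive difficulty beyond this; the statement is essentially the classical description of the spectrum of a Kronecker sum.
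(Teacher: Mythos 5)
Your proof is correct and follows essentially the same route as the paper: conjugate $H$ over the algebraic closure by a Kronecker product that puts $A^t$ and $A$ in triangular form, and then read the eigenvalues off the diagonal of the resulting triangular Kronecker sum. The only cosmetic difference is that the paper passes to Jordan forms $J(A^t)\otimes I_n + I_n\otimes J(-\lambda A)$, whereas you use a general upper triangularization (and additionally spell out the index bookkeeping and the harmlessness of reordering the diagonal of $S$), which is a slightly more elementary way of phrasing the same argument.
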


\begin{proof}
Suppose $R\in F^{m\times m}$, $S\in F^{n\times n}$ and
$c_R(t)$ and $c_S(t)$ factor as $\prod_{i=1}^m(t-\rho_i)$
and $\prod_{j=1}^n(t-\sigma_j)$ respectively in $\overline{F}$.
Now $H_1=R\otimes I_n+I_m\otimes S$ is conjugate in $\GL(mn,\overline{F})$ to
$H_2=J(R)\otimes I_n+I_m\otimes J(S)$ where $J(R)$ and $J(S)$ are the
Jordan forms of $R$ and $S$.
Thus $c_{H_1}(t)=c_{H_2}(t)=\prod_{i=1}^m\prod_{j=1}^n(t-(\rho_i+\sigma_j))$.
Now set $R=A^t$ and $S=-\lambda A$.
\end{proof}

\section{The cases when \texorpdfstring{$A$}{} is cyclic}\label{S:cyclic}

Given polynomials $f(t)=\prod_{i=1}^m(t-\alpha_i)$ and
$g(t)=\prod_{j=1}^n(t-\beta_j)$ define $(f\otimes g)(t)$ to be
$\prod_{i=1}^m\prod_{j=1}^n(t-\alpha_i\beta_j)$. It is easy to prove that
$(f\otimes g)\otimes h=f\otimes (g\otimes h)$, $f\otimes (t-1)=f$,
$(fg)\otimes h=(f\otimes h)(g\otimes h)$, see~\cite{SG}.

For $0\ne\lambda\in F$ define the
\emph{$\lambda$-twisted characteristic polynomial} of $A\in F^{n\times n}$ to be
\[
  c_{A,\lambda}(t):=c_A(t)\otimes(t-\lambda^{-1})=\lambda^{-n}c_A(\lambda t).
\]
If
$c_A(t)=\prod_{\alpha\in S(A)}(t-\alpha)^{m_\alpha}=t^n+\sum_{i=0}^{n-1}\gamma_it^i$,
then
\[
  c_{A,\lambda}(t)=t^n+\sum_{i=0}^{n-1}\gamma_i\lambda^{-(n-i)}t^i
  =\prod_{\alpha\in S(A)}\left(t-\lambda^{-1}\alpha\right)^{m_\alpha}.
\]
Since
$\gcd(f,g)\otimes (t-\lambda)=\gcd(f\otimes (t-\lambda),g\otimes (t-\lambda))$,
and $(t-\lambda^{-1})\otimes(t-\lambda)=t-1$, we have
\begin{align*}
  \gcd(c_A(t),c_{A,\lambda}(t))\otimes(t-\lambda)
  &=\gcd(c_A(t)\otimes(t-\lambda),c_A(t)\otimes(t-\lambda^{-1})\otimes(t-\lambda))\notag\\
  &=\gcd(c_{A,\lambda^{-1}}(t),c_A(t))\label{E:ten}.
\end{align*}
Therefore 
$\deg(\gcd(c_A(t),c_{A,\lambda}(t)))=\deg\gcd(c_{A,\lambda^{-1}}(t),c_A(t))$, as stated in Theorem~\ref{T:Zero}, in
the second equality of~\eqref{E:deg}. The first equality proof of~\eqref{E:deg}
uses the following lemma.

A matrix $A\in F^{n\times n}$ is called \emph{cyclic} if it has a cyclic vector $v$, that is $v, vA,\dots, vA^{n-1}$
is a basis for~$F^n$. 
Equivalently, $c_A(t)$ equals the minimal polynomial of $A$ by~\cite[\S12]{HH}.

\begin{lemma}\label{L:HH}
Given a cyclic matrix $A\in F^{n\times n}$ and a polynomial
$g(t)\in F[t]$, the linear map $\phi_g\colon F^n\to F^n$ defined by
$v^{\phi_g}=vg(A)$ has kernel of dimension $\deg(\gcd(c_A(t),g(t)))$. 
\end{lemma}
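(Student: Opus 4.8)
The plan is to translate the statement into elementary commutative algebra over the polynomial ring $F[t]$, using the hypothesis that $A$ is cyclic. Regard $V=F^n$ as a right $F[t]$-module with $t$ acting as right multiplication by $A$, so that $v\cdot h(t)=vh(A)$ and $\phi_g$ is the action of $g(t)$. Since $A$ is cyclic there is a vector $v_0$ for which $v_0,v_0A,\dots,v_0A^{n-1}$ is an $F$-basis of $V$; equivalently the $F[t]$-module map $\psi\colon F[t]\to V$, $h(t)\mapsto v_0h(A)$, is surjective and is injective on polynomials of degree $<n$. By the Cayley--Hamilton theorem $c_A(A)=0$, so $(c_A(t))\subseteq\ker\psi$, and since $\dim_F F[t]/(c_A(t))=n=\dim_F V$ we get $\ker\psi=(c_A(t))$. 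Thus $\psi$ induces an isomorphism of $F[t]$-modules $\bar\psi\colon R\to V$, where $R:=F[t]/(c_A(t))$, and under this identification $\phi_g$ is multiplication by the residue $\bar g$ of $g(t)$ in $R$ (because $\phi_g(v_0h(A))=v_0(gh)(A)$). Consequently $\dim_F\ker\phi_g$ equals the $F$-dimension of the annihilator $\{\bar h\in R\mid\bar g\,\bar h=0\}$; in particular this number depends only on $g\bmod c_A$, so no hypothesis relating $\deg g$ and $n$ is needed.

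Next I would evaluate that annihilator. Put $d(t)=\gcd(c_A(t),g(t))$ and write $c_A(t)=d(t)c_1(t)$ and $g(t)=d(t)g_1(t)$, so that $\gcd(c_1(t),g_1(t))=1$. For $h(t)\in F[t]$ one has $\bar g\,\bar h=0$ in $R$ iff $c_A(t)\mid g(t)h(t)$, i.e. $d(t)c_1(t)\mid d(t)g_1(t)h(t)$, i.e. $c_1(t)\mid g_1(t)h(t)$; by coprimality of $c_1(t)$ and $g_1(t)$ this holds iff $c_1(t)\mid h(t)$. Hence the annihilator of $\bar g$ in $R$ is exactly the image of the ideal $(c_1(t))$ of $F[t]$, namely $(c_1(t))/(c_A(t))$.

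A dimension count then finishes the proof. The residues $\overline{c_1(t)t^i}$ for $0\le i<\deg d(t)$ lie in $(c_1(t))/(c_A(t))$, they span it (reduce any multiple of $c_1(t)$ modulo $c_A(t)=d(t)c_1(t)$), and they are $F$-linearly independent since a nontrivial $F$-combination of them is a nonzero polynomial of degree $<\deg c_A(t)$, hence nonzero in $R$. Therefore
\[
  \dim_F\ker\phi_g=\deg d(t)=\deg\gcd(c_A(t),g(t)),
\]
as claimed.

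I do not expect a serious obstacle here: the entire argument is routine once the module picture is in place. The one step that genuinely exploits the hypothesis is the identification $V\cong F[t]/(c_A(t))$, which relies on $m_A=c_A$ and fails for non-cyclic $A$; everything after that is standard divisibility bookkeeping in the principal ideal domain $F[t]$.
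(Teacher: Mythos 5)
Your proof is correct and takes essentially the same approach as the paper's: both use the cyclic hypothesis to identify $V$ with $F[t]/(c_A(t))$ (the paper does this by writing every vector as $e_0h(A)$ with $\deg h<n$, you do it via the explicit module isomorphism), both reduce the kernel condition to the divisibility $c_A(t)\mid h(t)g(t)$, and both conclude by observing this is equivalent to $c_A(t)/\gcd(c_A,g)$ dividing $h(t)$, which gives a space of dimension $\deg\gcd(c_A,g)$. The only difference is cosmetic: you phrase the final count as the dimension of the annihilator ideal $(c_1)/(c_A)$ in $F[t]/(c_A)$, while the paper counts the polynomials $h$ of degree $<n$ divisible by $c_A/d$ directly.
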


\begin{proof}
As $A$ is cyclic, there exists a
vector $e_0\in V:=F^n$ such that $e_0,e_0A,\dots,e_0A^{n-1}$ is a basis for~$V$.
Thus each $v\in V$ equals $e_0h(A)$ for a unique
$h(t)=\sum_{i=0}^{n-1}h_it^i\in F[t]$ of degree at most $n-1$.
Observe that $e_0h(A)$ lies in $\ker(\phi_g)$ precisely when
\[
  e_0h(A)g(A)=0 \Leftrightarrow e_0A^ih(A)g(A)=0\textup{ for $0\le i\le n-1$}
  \Leftrightarrow h(A)g(A)=0.
\]
In summary, $e_0h(A)\in\ker(\phi_g)\Leftrightarrow c_A(t)\mid h(t)g(t)
\Leftrightarrow c_A(t)/d(t)$ divides $h(t)$ where
$d(t)$ is defined to be $\gcd(c_A(t),g(t))$. It follows that the space
of possible $h(t)$ has dimension $\deg(d(t))$ and hence
$\dim \ker(\phi_g)=\deg(d(t))$. 
\end{proof}

The next result explores twisted centraliser codes for cyclic matrices.

\begin{theorem}\label{T:Zero}
If $A\in F^{n\times n}$ is cyclic, and $\lambda\ne0$, then 
\begin{equation}\label{E:deg}
  \dim C_F(A,\lambda)=\deg(\gcd(c_A(t),c_{A,\lambda}(t)))
  =\deg(\gcd(c_{A,\lambda^{-1}}(t),c_A(t))).
\end{equation}
\end{theorem}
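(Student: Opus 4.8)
The plan is to reduce the statement to Lemma~\ref{L:HH} by exhibiting $C_F(A,\lambda)$ as a kernel of a map $\phi_g$ for a suitable polynomial $g$. Since $A$ is cyclic, fix a cyclic vector $e_0$ so that $e_0,e_0A,\dots,e_0A^{n-1}$ is a basis of $V=F^n$. The key observation is that $C_F(A)=F[A]$ when $A$ is cyclic (the centralizer of a cyclic matrix is exactly its own polynomial algebra), and $C_F(A,\lambda)$ is a module over $C_F(A)=F[A]$. I would first argue that $C_F(A,\lambda)$ is at most one-dimensional as an $F[A]$-module, then identify which $F[A]$-multiples of a fixed candidate $B_0$ actually lie in $C_F(A,\lambda)$.

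Concretely, the forward-looking steps are: (1) Show any $B\in C_F(A,\lambda)$ is determined by the single row $e_0 B$; since $AB=\lambda BA$ gives $(e_0A^i)B = e_0 A^i B = \lambda^i e_0 B A^i$, the whole matrix $B$ is recovered from the vector $w:=e_0B$ via $e_0A^i\mapsto \lambda^i w A^i$. So $B\mapsto e_0B$ embeds $C_F(A,\lambda)$ into $V$. (2) Conversely, given $w\in V$, write $w = e_0 h(A)$ for a unique $h(t)$ of degree $<n$; the formula $e_0A^i\mapsto \lambda^i w A^i$ defines a matrix $B$, and one checks $B\in C_F(A,\lambda)$ iff a consistency/well-definedness condition holds, namely that the assignment respects the relation $c_A(A)=0$: applying the recipe to $e_0 c_A(A)=0$ must give $0$. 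Unwinding, the condition becomes $\lambda^n e_0 (c_A\otimes(t-\lambda^{-1}))(A)\cdot$ something $=0$; more precisely the obstruction is governed by evaluating the ``twisted'' polynomial. (3) Translate this condition into: $w$ lies in $\ker(\phi_g)$ where $g(t)=c_{A,\lambda}(t)$ (or equivalently $g(t)=\lambda^n c_A(\lambda^{-1}t)$), and invoke Lemma~\ref{L:HH} to get $\dim = \deg(\gcd(c_A(t),c_{A,\lambda}(t)))$. (4) The second equality in~\eqref{E:deg} is already established in the text via the $\otimes$ identities, so nothing further is needed there.

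An alternative, perhaps cleaner route: use the parity-check description. By Lemma~\ref{L:RNS}, $\dim C_F(A,\lambda) = \dim\operatorname{RNull}(A^t\otimes I_n - \lambda I_n\otimes A)$, and by Proposition~\ref{P} the eigenvalues of $H=A^t\otimes I_n-\lambda I_n\otimes A$ are the $\alpha_i-\lambda\alpha_j$. When $A$ is cyclic, $A^t$ is also cyclic with the same invariant factors, so $H$ acts on $F^{n^2}$ as $g(A^t\otimes I_n)$-type data — but this is less direct. I expect the module-theoretic argument in steps (1)–(3) to be the intended one, and the main obstacle will be step (2): verifying precisely that the well-definedness of the reconstruction map $B$ from $w=e_0 h(A)$ is equivalent to $h(A)\cdot\bigl(\lambda^n c_A(\lambda^{-1}A)\bigr)$-type vanishing, i.e.\ correctly bookkeeping the powers of $\lambda$ so that the relevant polynomial is exactly $c_{A,\lambda}(t)$ rather than $c_{A,\lambda^{-1}}(t)$ or a scalar multiple. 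Getting that identification right — and handling the case $\gcd=c_A$ (so $B$ can be invertible) versus $\gcd$ proper — is where the care lies; once it is pinned down, Lemma~\ref{L:HH} finishes the proof immediately.
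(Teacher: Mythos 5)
Your plan matches the paper's proof essentially step for step: fix a cyclic vector $e_0$, show that $B$ is determined by $b=e_0B$ via $e_iB=\lambda^i bA^i$, reduce membership in $C_F(A,\lambda)$ to the single compatibility condition obtained by applying the recipe to $e_0c_A(A)=0$, identify that condition as $b\in\ker\phi_{c_{A,\lambda}}$, and invoke Lemma~\ref{L:HH}; the second equality in~\eqref{E:deg} is indeed handled by the tensor-product identities preceding the theorem. One small slip worth flagging: $\lambda^n c_A(\lambda^{-1}t)$ equals $c_{A,\lambda^{-1}}(t)$, not $c_{A,\lambda}(t)=\lambda^{-n}c_A(\lambda t)$ --- the compatibility condition that actually emerges is $b\,c_A(\lambda A)=0$, hence $g=c_{A,\lambda}$, though either choice gives the same degree by the $\gcd$ identity established just before the theorem.
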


\begin{proof}
Since $A$ is cyclic, there is a basis $e_0,e_1,\dots,e_{n-1}$ for the
row space $V=F^n$, where $e_i=e_0A^i$ for $0\le i<n$. Without loss 
of generality write $A$ with respect to this basis. Let $b$ be the first
row of $B\in C_F(A,\lambda)$, so $b=e_0B$.
Since $AB-\lambda BA=0$ we see that $e_iAB=\lambda e_iBA$, that is,
$e_{i+1}B=\lambda e_iBA$. A simple
induction shows that $e_iB=\lambda^ibA^i$ for $0\le i\le n-1$. 
Thus $B\in C_F(A,\lambda)$ is determined by the vector $b$.
On the other hand let $b\in F^n$ and let $B\in F^{n\times n}$
be the matrix with rows $b, \lambda bA, \dots, \lambda^{n-1}bA^{n-1}$.
Then, for $0\le i\le n-2$, we clearly have
\[ 
  e_i(AB-\lambda BA)=e_{i+1}B-\lambda e_iBA
   =\lambda^{i+1}bA^{i+1}-\lambda(\lambda^ibA^i)A=0.
\]
It remains to determine when $e_{n-1}(AB-\lambda BA)=0$.
Since $e_0,e_0A,\dots,e_0A^{n-1}$ is a basis for $V$,
there exist scalars $\gamma_0,\dots,\gamma_{n-1}\in F$ such that
$e_0A^n=-\sum_{i=0}^{n-1}\gamma_ie_0A^i$. Indeed, the characteristic
polynomial of $A$ is $c_A(t)=t^n+\sum_{i=0}^{n-1}\gamma_it^i$. Furthermore,
\begin{align*}
  e_{n-1}AB&=e_0A^nB=-\sum_{i=0}^{n-1}\gamma_ie_0A^iB=-\sum_{i=0}^{n-1}\gamma_ie_iB
  =-\sum_{i=0}^{n-1}\gamma_i\lambda^ibA^i,\quad\textup{and}\\
  \lambda e_{n-1}BA&=\lambda(\lambda^{n-1}bA^{n-1})A=\lambda^nbA^n.
\end{align*}
Thus the equation $e_{n-1}(AB-\lambda BA)=0$ is equivalent to
\[
  0=-\lambda^nbA^n-\sum_{i=0}^{n-1}\gamma_i\lambda^ibA^i
  =-bc_A(\lambda A)=-\lambda^nbc_{A,\lambda}(A).
\]
Hence we conclude that $\dim C_F(A,\lambda)$ equals the dimension of the row
null space of the matrix $c_{A,\lambda}(A)$. Since $A$ is cyclic, it follows
from Lemma~\ref{L:HH} that this null space has
dimension $\deg(\gcd(c_A(t),c_{A,\lambda}(t)))$. 
\end{proof}

\begin{remark}
If $e_0, e_0A,\dots,e_0A^{n-1}$ is a basis for $F^n$, the proof of
Theorem~\ref{T:Zero} gives a vector space isomorphism
$ C_F(A,\lambda)\to\textup{RNull}(c_{A,\lambda}(A))$ given by $B\mapsto e_0B$.
\end{remark}

\begin{remark}
Every matrix $A$ is conjugate to a sum $A_1\oplus\cdots\oplus A_r$
of cyclic matrices by~\cite{HH}. Thus Theorem~\ref{T:Zero} gives 
$\dim C_F(A,\lambda)\ge\sum_{i=1}^r \deg(\gcd(c_{A_i}(t),c_{A_i,\lambda}(t)))$.
\end{remark}

\begin{lemma}\label{L:min}
$\dim C_F(A)\ge\deg(m_A(t))$ with equality if and only if $A$ is cyclic.
\end{lemma}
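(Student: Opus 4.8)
The plan is to work over the algebraic closure $\overline F$ (or, more economically, over the splitting field $L$), using Lemma~\ref{L:FL} to reduce to the case where $A$ is in Jordan form: since $\dim C_F(A)=\dim C_L(A)$ and $\deg(m_A(t))$ is unchanged by field extension, I may assume $A=\bigoplus_{\alpha\in S(A)}\bigoplus_{i} J_{\mu^{(\alpha)}_i}(\alpha)$, a direct sum of Jordan blocks. The block-diagonal analysis already used in the proof of Theorem~\ref{T:decom} (specialized to $\lambda=1$) shows that $C_L(A)$ decomposes as a direct sum over the distinct eigenvalues $\alpha$, because the off-diagonal blocks linking generalized eigenspaces for different eigenvalues must vanish; hence $\dim C_F(A)=\sum_{\alpha\in S(A)}\dim C_L(A_\alpha)$ where $A_\alpha$ is the $(t-\alpha)$-primary part, and $\deg m_A(t)=\sum_{\alpha\in S(A)}\mu^{(\alpha)}_1$. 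So it suffices to prove the inequality and the equality condition one eigenvalue at a time, i.e.\ for $A$ having a single eigenvalue $\alpha$ with Jordan type given by a partition $\mu$ of $m_\alpha$.

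For a single-eigenvalue matrix, translating by $\alpha I$ does not change the centralizer, so I reduce to $A$ nilpotent with Jordan type $\mu=(\mu_1,\dots,\mu_k)$. Here I invoke the formula quoted just before Theorem~\ref{T:NilpDim}: $\dim C_F(A)=\sum_{i=1}^{k'}(\mu'_i)^2$ where $\mu'$ is the conjugate partition and $k'=\mu_1=\deg m_A(t)$. Thus the claim becomes the purely combinatorial inequality $\sum_{i=1}^{\mu_1}(\mu'_i)^2\ge \mu_1$, with equality iff $A$ is cyclic. Since each $\mu'_i\ge1$ for $1\le i\le\mu_1$, we get $\sum_{i=1}^{\mu_1}(\mu'_i)^2\ge\sum_{i=1}^{\mu_1}1=\mu_1$, and equality forces $\mu'_i=1$ for all $i$, i.e.\ $\mu'=(1,1,\dots,1)$, equivalently $\mu=(n)$, equivalently $A$ (in the single-eigenvalue case) is a single Jordan block, i.e.\ cyclic.

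Finally I reassemble: $\dim C_F(A)=\sum_\alpha \dim C_L(A_\alpha)\ge\sum_\alpha \mu^{(\alpha)}_1=\deg m_A(t)$. For the equality condition, $\dim C_F(A)=\deg m_A(t)$ forces equality in every summand, so each primary component $A_\alpha$ is a single Jordan block; a matrix all of whose primary parts are single Jordan blocks has $c_A(t)=\prod_\alpha(t-\alpha)^{m_\alpha}=m_A(t)$, i.e.\ $A$ is cyclic. Conversely a cyclic $A$ has $c_A=m_A$ and each primary part is a single Jordan block, giving equality throughout. The only real content is the cited centralizer-dimension formula $\sum(\mu'_i)^2$; granting it (as the excerpt does), the main obstacle is merely bookkeeping the primary decomposition of the centralizer, which is the $\lambda=1$ instance of arguments already present in Theorem~\ref{T:decom} and Theorem~\ref{T:decom2}. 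One subtlety to handle carefully is that the primary decomposition requires passing to $L$; I should note explicitly that both sides of the asserted identity are insensitive to this base change (Lemma~\ref{L:FL} for the left side, and the fact that $m_A$ is a polynomial over $F$ whose degree is field-independent for the right side).
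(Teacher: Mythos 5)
Your proof is correct, but it takes a genuinely different route from the paper's. The paper's argument is a one-liner: $F[A]=\{f(A)\mid f\in F[t]\}$ is always a subspace of $C_F(A)$ of dimension exactly $\deg m_A(t)$ (since $I,A,\dots,A^{\deg m_A-1}$ is a basis), which gives the inequality immediately; for the equality characterization it simply invokes \cite[Theorem~2.1]{NeuP}, which states that $F[A]=C_F(A)$ if and only if $A$ is cyclic. Your argument instead passes to the splitting field via Lemma~\ref{L:FL}, uses the primary/Jordan decomposition of $C_L(A)$, reduces to the nilpotent single-block-type case, and applies the explicit dimension formula $\dim C_F(A)=\sum_i(\mu_i')^2$ together with the combinatorial observation $\sum_{i=1}^{\mu_1}(\mu_i')^2\ge\mu_1$ with equality iff $\mu'=(1,\dots,1)$. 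Both are valid. The paper's proof is shorter and outsources the real content to the Neumann--Praeger characterization of cyclic matrices, whereas yours makes the dimension count explicit and shows concretely where cyclicity enters (exactly one Jordan block per eigenvalue). One small citation point: the primary decomposition $C_L(A)=\bigoplus_\alpha C_L(A_\alpha)$ at $\lambda=1$ is the classical fact cited from \cite{HH} in the proof of Proposition~\ref{P:UB}; Theorem~\ref{T:decom2} carries a positive-characteristic hypothesis that you do not want (and do not need) to assume, so it is not the right reference here.
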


\begin{proof}
It is clear that $F[A]\le C_F(A)$ where $F[A]=\{f(A)\mid f(t)\in F[t]\}$.
Hence $\deg( m_A(t))=\dim(F[A])\le \dim C_F(A)$.  Equality holds if and only if
$F[A]= C_F(A)$, and this is true if and only if $A$ is cyclic
by~\cite[Theorem~2.1]{NeuP}.
\end{proof}

Recall from Section~\ref{S:Intro} that $L$ is the splitting field for $c_A(t)$,
and $S(A)$ is the
set of roots of $c_A(t)$, also called the \emph{spectrum} of $A$.
Recall that $c_A(t)=\prod_{\alpha\in S(A)}(t-\alpha)^{m_{A,\alpha}}$ and
$L^n=\bigoplus_{\alpha\in S(A)} M_{A,\alpha}$. Also $K_{A,\alpha}$ is the
$\alpha$-eigenspace of $A$, and $K_{A,\alpha}\le M_{A,\alpha}$.

\begin{proposition}\label{PK}
\textup{\cite[Theorem 66]{K}}
Let $F$ be an arbitrary field, and let $A$ be an $n\times n$ matrix
over~$F$. Then $A$ is conjugate to $A^t$ via a symmetric matrix.
\end{proposition}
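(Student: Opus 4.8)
The plan is to reduce, using~\cite{HH}, to the case of a single cyclic matrix, and then to produce the symmetric conjugator explicitly by means of a bilinear form on $F[t]/(f)$. Call $A\in F^{n\times n}$ \emph{balanced} if there is an invertible symmetric matrix $P$ with $P^{-1}AP=A^{t}$, equivalently $AP=PA^{t}$ (transpose, and use $P=P^{t}$); being balanced is exactly the conclusion of the proposition. Two formal closure properties carry most of the argument. First, balancedness is a similarity invariant: if $AP=PA^{t}$ and $B=Q^{-1}AQ$, then $P':=Q^{-1}P(Q^{-1})^{t}$ is invertible and symmetric, and a one-line computation using $B^{t}=Q^{t}A^{t}(Q^{t})^{-1}$ gives $BP'=P'B^{t}$. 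Second, if $A_{1}P_{1}=P_{1}A_{1}^{t}$ and $A_{2}P_{2}=P_{2}A_{2}^{t}$, then $(A_{1}\oplus A_{2})(P_{1}\oplus P_{2})=(P_{1}\oplus P_{2})(A_{1}\oplus A_{2})^{t}$, so a direct sum of balanced matrices is balanced. Since by~\cite{HH} every matrix over $F$ is conjugate to a direct sum of cyclic matrices, these two facts reduce the proposition to the claim that every cyclic matrix is balanced.

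Next I would treat the cyclic case. If $A$ is cyclic then $c_{A}(t)=m_{A}(t)=:f(t)$, and choosing a cyclic vector shows that $A$ is conjugate to the companion matrix $C_{f}$ of $f$; by the similarity invariance above it then suffices to show $C_{f}$ is balanced. Identify $F^{n}$ with $R:=F[t]/(f)$, with basis $1,t,\dots,t^{n-1}$, so that $C_{f}$ is the matrix of multiplication by $t$. On $R$ define the $F$-bilinear form $\beta(g,h):=$ (coefficient of $t^{n-1}$ in $gh\bmod f$). This form is symmetric, because $gh=hg$; it is nondegenerate, because $\beta(g,t^{\,n-1-\deg g})$ is the leading coefficient of $g$; and it is \emph{$t$-invariant}, meaning $\beta(tg,h)=\beta(g,th)$, since $tg\cdot h$ and $g\cdot th$ are equal in $R$. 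Let $P=\bigl(\beta(t^{i},t^{j})\bigr)_{0\le i,j\le n-1}$ be the Gram matrix of $\beta$ in this basis. Then $P$ is symmetric; $P$ is invertible, since it is anti-triangular with every anti-diagonal entry equal to $1$, so that $\det P=\pm1$ regardless of the characteristic of $F$; and unpacking the identity $\beta(t\cdot t^{i},t^{j})=\beta(t^{i},t\cdot t^{j})$ via $t\cdot t^{j}=\sum_{k}(C_{f})_{kj}\,t^{k}$ yields exactly $C_{f}^{t}P=PC_{f}$. Hence $C_{f}^{t}=PC_{f}P^{-1}$, so $C_{f}$ is balanced, finishing the reduction.

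The only step that is not pure bookkeeping is this cyclic case, and within it the two properties of $\beta$: nondegeneracy (equivalently, invertibility of $P$) and the translation of $t$-invariance into $C_{f}^{t}P=PC_{f}$. I expect both to be routine: the anti-triangular shape of $P$ makes nondegeneracy transparent, and the matrix identity is a direct expansion of each side. I would stress that the specific choice of $\beta$ above — rather than, say, the trace form $g,h\mapsto\textup{Tr}(gh\bmod f)$, which degenerates when $f$ is not squarefree — is what lets the argument run over an \emph{arbitrary} field, as the proposition requires. As an optional remark I would record the conjugator explicitly: in the standard basis $P$ is the symmetric Hankel-type matrix whose $(i,j)$ entry is the coefficient of $t^{n-1}$ in $t^{i+j}\bmod f$, and one can verify $C_{f}^{t}P=PC_{f}$ by hand from this description without mentioning $\beta$ at all.
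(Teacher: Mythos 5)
Your proof is correct. The paper offers no argument of its own here: it simply cites Kaplansky's Theorem~66, so there is no ``paper's proof'' to compare against beyond that reference. Your argument is a sound, self-contained proof along the standard lines, and it is essentially the one Kaplansky gives: reduce by rational canonical form to a single companion matrix $C_f$, then exhibit the symmetric conjugator as the Gram matrix of a nondegenerate symmetric multiplication-invariant bilinear form on $F[t]/(f)$. The two closure properties (similarity invariance of ``balanced,'' and closure under direct sums) are verified correctly, the reduction via \cite{HH} is legitimate, and the three claimed properties of $\beta$ --- symmetry, nondegeneracy, and $t$-invariance --- all check out; in particular, the anti-triangular structure of $P$ (zero when $i+j<n-1$, one when $i+j=n-1$) makes $\det P=\pm1$ and hence $P$ invertible over any field. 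Your remark about why this form is preferable to the trace form for non-squarefree $f$ is a nice observation and accurately identifies where the ``arbitrary field'' hypothesis is exploited. One could shorten the write-up by noting that $P$ is a Hankel matrix and verifying $C_f^tP=PC_f$ directly without introducing $\beta$, as you yourself point out, but the bilinear-form packaging is cleaner and explains where $P$ comes from.
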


Proposition~\ref{PK} implies that $k_{A^t,\alpha}=k_{A,\alpha}$ and 
$m_{A^t,\alpha}=m_{A,\alpha}$ for all $\alpha\in\overline{F}$.

It follows from Theorem~\ref{T:NilpDim} that the possibility
$\dim C_F(A,\lambda)>k_0^2$ occurs when $\mu_1>1$,
$A\in F^{n\times n}$ is nilpotent,
and $0\ne\lambda\in F$. This proves that Theorem~2.4 of~\cite{CC}
(which considers the case $\lambda=1$) is wrong. A correct
generalization is below.

\begin{theorem}\label{Tm}
Suppose $\lambda\in F$ and $A\in F^{n\times n}$. Then 
\[
  \sum_{\alpha\in S(A)}k_{A,\lambda\alpha}k_{A,\alpha}
  \le\dim_F C_F(A,\lambda)\le\sum_{\alpha\in S(A)}m_{A,\lambda\alpha}m_{A,\alpha}.
\]
\end{theorem}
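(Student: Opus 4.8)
The plan is to pass to the algebraic closure and use the parity check matrix $H=A^t\otimes I_n-\lambda I_n\otimes A$ from Lemma~\ref{L:RNS}, whose row null space is $\Vec{C_F(A,\lambda)}$. By Lemma~\ref{L:FL} we have $\dim_F C_F(A,\lambda)=\dim_L C_L(A,\lambda)$, so we may work over the splitting field $L$ (or over $\overline F$), where $A$ is conjugate to its Jordan form; since conjugating $A$ does not change $\dim C_F(A,\lambda)$, we may assume $A=\bigoplus_{\alpha\in S(A)}J(M_{A,\alpha})$ is in Jordan form, block-diagonal with one block per $\alpha\in S(A)$ of size $m_{A,\alpha}$. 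Then $H$ inherits a block structure: writing $B=(B_{\alpha\beta})_{\alpha,\beta\in S(A)}$ with $B_{\alpha\beta}\in L^{m_{A,\alpha}\times m_{A,\beta}}$, the equation $AB-\lambda BA=0$ decouples into the independent conditions $A_\alpha B_{\alpha\beta}-\lambda B_{\alpha\beta}A_\beta=0$, one for each pair $(\alpha,\beta)$, where $A_\gamma$ is the Jordan block for $\gamma$. Hence $\dim_L C_L(A,\lambda)=\sum_{\alpha,\beta\in S(A)}\dim_L\{X\in L^{m_{A,\alpha}\times m_{A,\beta}}\mid A_\alpha X-\lambda X A_\beta=0\}$.

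The next step is to show that the $(\alpha,\beta)$-summand is nonzero only when $\beta=\lambda^{-1}\alpha$, i.e. $\alpha=\lambda\beta$, and to bound it in that case. For a fixed pair, iterating $X=\lambda^{-1}A_\alpha X A_\beta^{-1}$ (when $\beta\ne0$) or arguing directly with nilpotent parts shows $X=\lambda^{-k}A_\alpha^k X A_\beta^{-k}$ for all $k$; comparing this with the Sylvester-type operator $X\mapsto A_\alpha X-\lambda XA_\beta$, whose eigenvalues on $L^{m_\alpha\times m_\beta}$ are $\{\gamma-\lambda\delta\}$ as $\gamma$ ranges over eigenvalues of $A_\alpha$ and $\delta$ over eigenvalues of $A_\beta$ (the computation behind Proposition~\ref{P}), we see the operator is invertible unless $\alpha=\lambda\beta$. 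When $\alpha=\lambda\beta$, the solution space $\{X\mid A_\alpha X=\lambda XA_\beta\}=\{X\mid (A_\beta+(\alpha-\beta)I)X=\lambda XA_\beta\}$ is contained in the kernel of a nilpotent-type operator on an $m_\alpha m_\beta$-dimensional space; crude bounds give $\dim\le m_\alpha m_\beta=m_{A,\lambda\beta}m_{A,\beta}$, giving the upper bound $\sum_{\beta}m_{A,\lambda\beta}m_{A,\beta}=\sum_{\alpha}m_{A,\lambda\alpha}m_{A,\alpha}$ after reindexing (noting that a term is zero unless $\lambda\beta\in S(A)$, which is already encoded since $m_{A,\gamma}=0$ for $\gamma\notin S(A)$).

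For the lower bound, I would exhibit an explicit $k_{A,\lambda\beta}k_{A,\beta}$-dimensional space of solutions $X$ for the pair $(\lambda\beta,\beta)$. The natural candidates are the ``rank-one'' solutions $X=u^t v$ where $v\in L^{m_\beta}$ is chosen so that $vA_\beta=\beta v$ (an eigenvector, there are $k_{A,\beta}$ independent choices inside the $\beta$-block) and $u\in L^{m_{\lambda\beta}}$ chosen so that $uA_{\lambda\beta}=\lambda\beta u$ (an eigenvector of the $\lambda\beta$-block, $k_{A,\lambda\beta}$ independent choices): then $A_{\lambda\beta}(u^tv)$... one checks $A_{\lambda\beta}u^t v = (\lambda\beta)u^t v=\lambda u^t(\beta v)=\lambda u^t(vA_\beta)=\lambda (u^tv)A_\beta$, so $X=u^tv$ solves the equation, and these span a space of dimension $k_{A,\lambda\beta}k_{A,\beta}$ (the tensor product of the two eigenspaces). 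Summing the block contributions gives the claimed lower bound. Reassembling via the block decomposition of $B$ and transporting back through $\Vec{\phantom{n}}$ and Lemma~\ref{L:FL} completes the proof.

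The main obstacle I anticipate is the bookkeeping in the off-diagonal blocks: making precise that the Sylvester operator $X\mapsto A_\alpha X-\lambda XA_\beta$ is invertible exactly when $\lambda\beta\notin\{\alpha\}$ (here using that each $A_\gamma$ has the single eigenvalue $\gamma$, so the relevant eigenvalue of the operator is $\alpha-\lambda\beta$ with multiplicity $m_\alpha m_\beta$), and being careful that both eigenvector counts $k_{A,\alpha}$ and $k_{A,\beta}$ genuinely refer to the global matrix $A$ — which holds because the Jordan block $A_\gamma$ has eigenspace dimension equal to its number of Jordan sub-blocks, i.e. exactly $k_{A,\gamma}$. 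The upper bound's crudeness ($m_\alpha m_\beta$ rather than something sharper) is harmless since that is exactly what the statement asks for.
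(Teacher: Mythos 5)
Your proposal is correct and follows essentially the same approach as the paper: pass to the splitting field $L$ via Lemma~\ref{L:FL}, decompose the solution space according to the primary decomposition of $L^n$, observe that the relevant Sylvester-type operator acts on the $(\alpha,\beta)$ block with the single eigenvalue $\alpha-\lambda\beta$ (so only blocks with $\alpha=\lambda\beta$ can contribute), bound each contributing block above by $m_{A,\lambda\beta}m_{A,\beta}$, and below by exhibiting rank-one solutions built from eigenvectors. The paper phrases this in the Kronecker/tensor picture of Lemma~\ref{L:RNS} (decomposing $\Vec{C_L(A,\lambda)}$ inside $\bigoplus_{\alpha,\beta}M_{A^t,\alpha}\otimes M_{A,\beta}$ and invoking Proposition~\ref{PK} to replace $k_{A^t,\alpha},m_{A^t,\alpha}$ by $k_{A,\alpha},m_{A,\alpha}$), whereas you work directly with matrix blocks $B_{\alpha\beta}$; these are the same argument under the identification $u\otimes v\leftrightarrow u^tv$.

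One small slip in your lower-bound verification: you choose $u$ with $uA_{\lambda\beta}=\lambda\beta u$ (a \emph{row} eigenvector) and then assert $A_{\lambda\beta}u^tv=(\lambda\beta)u^tv$, but transposing $uA_{\lambda\beta}=\lambda\beta u$ gives $A_{\lambda\beta}^t u^t=\lambda\beta u^t$, not $A_{\lambda\beta}u^t=\lambda\beta u^t$. What you actually need is for $u^t$ to be a \emph{column} $\lambda\beta$-eigenvector of $A_{\lambda\beta}$. The dimension count is unaffected, because the column and row $\gamma$-eigenspaces of $A_\gamma$ both have dimension $k_{A,\gamma}$ (the number of Jordan blocks with eigenvalue $\gamma$, or by rank-nullity); the paper sidesteps this neatly by working with eigenvectors of $A^t$ throughout and citing Proposition~\ref{PK} for $k_{A^t,\alpha}=k_{A,\alpha}$, $m_{A^t,\alpha}=m_{A,\alpha}$.
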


\begin{proof}
We use the isomorphism $\Vec{\phantom{n}}$ introduced before Lemma~\ref{L:RNS}.
Let $L$ be a splitting field for $c_A(t)$, and hence for $c_{A,\lambda}(t)$.
Since
$\dim_F C_F(A,\lambda)=\dim_L C_L(A,\lambda)$ by Lemma~\ref{L:FL},
we may assume that $F=L$. Suppose $uA^t=\alpha u$ and $vA=\beta v$ where
$\alpha,\beta\in S(A)=S(A^t)$.
Then $(u\otimes v)(A^t\otimes I-\lambda I\otimes A)
=(\alpha-\lambda\beta)(u\otimes v)$.
Take $\alpha=\lambda\beta$. Then $(u\otimes v)H=0$ and hence
$K_{A^t,\lambda\beta}\otimes K_{A,\beta}\le \Vec{C_L(A,\lambda)}$ by Lemma~\ref{L:RNS}. 
Observe that
$L^n=\bigoplus_{\alpha\in S(A^t)}M_{A^t,\alpha}=\bigoplus_{\beta\in S(A)}M_{A,\beta}$
and hence
$L^n\otimes L^n=\bigoplus_{\alpha,\beta}M_{A^t,\alpha}\otimes M_{A,\beta}$.
Thus the sum
$\sum_{\beta\in S(A^t)}K_{A^t,\lambda\beta}\otimes K_{A,\beta}\le \Vec{C_L(A,\lambda)}$
is direct. Taking dimensions and using $k_{A^t,\alpha}=k_{A,\alpha}$ establishes
the lower bound.

Suppose that $B\in C_L(A,\lambda)$. Then
$\Vec{B}=\sum_{\alpha,\beta} u_\alpha\otimes v_{\beta}$ for some
$u_\alpha\in M_{A^t,\alpha}$ and
$v_{\beta}\in M_{A,\beta}$. Since
$0=\Vec{B}H=\sum_{\alpha,\beta} (\alpha-\lambda\beta)u_\alpha\otimes v_{\beta}$,
it follows that $(\alpha-\lambda\beta)u_\alpha\otimes v_{\beta}=0$ for each
$\alpha,\beta$. If for some $\alpha,\beta$ we have
$\alpha\ne\lambda\beta$, then $u_\alpha\otimes v_{\beta}=0$.
Thus $\Vec{B}=\sum_{\beta} u_{\lambda\beta}\otimes v_{\beta}$ and it follows that
$\Vec{C_L(A,\lambda)}\le\bigoplus_{\beta\in S(A^t)}M_{A^t,\lambda\beta}\otimes M_{A,\beta}$.
Taking dimensions and using $m_{A^t,\alpha}=m_{A,\alpha}$ establishes
the upper bound.
\end{proof}

A matrix $A\in F^{n\times n}$ is called \emph{semisimple} if every $A$-invariant
subspace of $F^n$ has an $A$-invariant complement, or equivalently, the minimal polynomial of $A$ is separable. Recall that a polynomial over $F$ is
\emph{separable} if it has distinct roots in the algebraic closure $\overline{F}$; and separable means squarefree when $F$ is perfect.

\begin{corollary}\label{C:TFAE}
Suppose $\lambda\in F$, $A\in F^{n\times n}$ and $L$ is a splitting
field of $c_A(t)$. The following are equivalent: 
\begin{enumerate}[{\rm (a)}]
  \item $A$ is conjugate in $\GL(n,F)$ to the block
    matrix $A_\inv\oplus 0$ where $A_\inv$ is semisimple;
  \item $A$ is conjugate in $\GL(n,L)$ to a diagonal matrix;
  \item $\dim_F C_F(A,\lambda)=\sum_{\alpha\in S(A)}k_{A,\lambda\alpha}k_{A,\alpha}
    =\sum_{\alpha\in S(A)}m_{A,\lambda\alpha}m_{A,\alpha}$.
\end{enumerate}
\end{corollary}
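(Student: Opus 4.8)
The plan is to prove the cyclic chain of implications $(a)\Rightarrow(b)\Rightarrow(c)\Rightarrow(a)$, using Theorem~\ref{Tm} as the workhorse for the equivalence with~(c). The implication $(a)\Rightarrow(b)$ is essentially immediate: if $A$ is conjugate over $F$ to $A_\inv\oplus 0$ with $A_\inv$ semisimple, then over the splitting field $L$ the semisimple block $A_\inv$ diagonalizes (its minimal polynomial splits into distinct linear factors over $L$), and the zero block is already diagonal, so $A$ is diagonalizable over $L$. For $(b)\Rightarrow(c)$, suppose $A$ is conjugate over $L$ to a diagonal matrix; this is precisely the condition that $L^n=\bigoplus_{\alpha\in S(A)}M_{A,\alpha}$ with each $M_{A,\alpha}=K_{A,\alpha}$, i.e. $k_{A,\alpha}=m_{A,\alpha}$ for every $\alpha\in S(A)$. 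Then the two outer expressions in Theorem~\ref{Tm} coincide term-by-term, forcing $\dim_F C_F(A,\lambda)$ to equal their common value, which is exactly statement~(c). (One should note that $\lambda\alpha$ ranges over a set that may not be contained in $S(A)$; but $k_{A,\lambda\alpha}=m_{A,\lambda\alpha}=0$ unless $\lambda\alpha\in S(A)$, and when $\lambda\alpha\in S(A)$ the diagonalizability of $A$ gives $k_{A,\lambda\alpha}=m_{A,\lambda\alpha}$ as well, so the term-by-term equality is genuine.)

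The substantive implication is $(c)\Rightarrow(a)$, and this is where I expect the main work. Assuming the chain of equalities in~(c), the sandwich in Theorem~\ref{Tm} degenerates, and I would first extract from the proof of Theorem~\ref{Tm} the sharper conclusion that $k_{A,\lambda\alpha}k_{A,\alpha}=m_{A,\lambda\alpha}m_{A,\alpha}$ for \emph{every} $\alpha\in S(A)$ (since each lower-bound summand is $\le$ the corresponding upper-bound summand and the totals agree). The issue is that this per-$\alpha$ equality of products does not by itself force $k_{A,\alpha}=m_{A,\alpha}$: it only forces this for those $\alpha$ with $k_{A,\lambda\alpha}\ne 0$. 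So the plan is to exploit the flexibility in the choice of $\lambda$-less structure: one can run the same argument with the identity twist as a comparison, or better, observe that taking $\alpha$ with $k_{A,\alpha}\neq 0$ and $\lambda\alpha\in S(A)$ propagates along the orbit of multiplication-by-$\lambda$ on $S(A)$. The cleanest route I would pursue: apply Theorem~\ref{Tm} together with Lemma~\ref{L:min} and the fact that $\dim C_F(A)=\dim C_F(A,\lambda)$ is \emph{not} generally available here, so instead compare~(c) directly against the known unconditional bound $\dim_F C_F(A,\lambda)\le \dim_F C_F(A,\lambda)$ rephrased via the $\lambda=1$ case applied to the semisimple part. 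Concretely, if every $M_{A,\alpha}$ with $\alpha$ in the relevant $\lambda$-orbit is an eigenspace, one deduces the Jordan form of $A$ has no nontrivial blocks at those eigenvalues; handling the eigenvalues $\alpha$ with $\lambda\alpha\notin S(A)$ requires a separate observation, namely that for such $\alpha$ the term $m_{A,\lambda\alpha}m_{A,\alpha}=0$ in the upper bound and $k_{A,\lambda\alpha}k_{A,\alpha}=0$ in the lower bound contribute nothing, yet the global equality~(c) together with the contributions at $0$ (where $\lambda\cdot 0=0$, giving $k_{A,0}^2=m_{A,0}^2$, hence $A_\nil$ is \emph{semisimple nilpotent}, i.e. $A_\nil=0$) pins down $V_\nil=K_{A,0}$ and thus $A\sim A_\inv\oplus 0$; finally rerunning the equality on $A_\inv$ alone (a matrix with no zero eigenvalue) and noting $\lambda\cdot\alpha$ stays inside $S(A_\inv)\cup\{\text{something outside}\}$ forces each remaining $M_{A,\alpha}=K_{A,\alpha}$, i.e. $A_\inv$ is semisimple.

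The anticipated main obstacle is exactly the bookkeeping with eigenvalues $\alpha$ for which $\lambda\alpha$ lies \emph{outside} $S(A)$: there the Theorem~\ref{Tm} bounds give no information about $m_{A,\alpha}$ versus $k_{A,\alpha}$, so the per-summand equality argument alone is insufficient, and one must instead use the decomposition results (Theorem~\ref{T:decom} for the $\{0\}$-versus-invertible split, and Theorem~\ref{T:decom2} / primary decomposition for the eigenvalue-orbit structure) to reduce to the case where multiplication by $\lambda$ permutes $S(A)$, or to argue that any Jordan block of size $>1$ at such an $\alpha$ would be invisible to $C_F(A,\lambda)$ but \emph{visible} to a companion count that contradicts~(c). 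I would handle this by first proving the corollary under the extra hypothesis that $\lambda S(A)=S(A)$ (where the term-by-term argument is clean), and then bootstrapping to the general case by peeling off, via Theorem~\ref{T:decom2}, the $A$-invariant summand corresponding to the union of $\lambda$-orbits that stay inside $S(A)$, observing that on the complementary summand every eigenvalue $\alpha$ has $\lambda\alpha\notin S(A)$, whence \emph{both} bounds in Theorem~\ref{Tm} restricted to that summand are $0$ — so that summand contributes $0$ to $\dim C_F$, which (by Theorem~\ref{T:null2rk2} or a direct argument) is impossible unless that summand corresponds to a \emph{single} eigenvalue $\alpha$ with $m_{A,\alpha}$ possibly $>k_{A,\alpha}$; ruling this last configuration out under~(c) is the delicate point, and I expect it to follow from the strict inequality $k_{A,\alpha}^2<m_{A,\alpha}^2$ when $m_{A,\alpha}>k_{A,\alpha}$ applied to the global sum. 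If that final case-check proves stubborn, the fallback is to invoke Proposition~\ref{PK} to replace $A$ by a symmetric conjugate and argue directly with eigenvectors of $A^t\otimes I-\lambda I\otimes A$ as in the proof of Theorem~\ref{Tm}, extracting from the dimension count that $\Vec{C_L(A,\lambda)}$ already fills the eigenspace tensor products, which forces semisimplicity.
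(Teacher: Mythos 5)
Your treatment of $(a)\Rightarrow(b)\Rightarrow(c)$ matches the paper's and is fine. The substance is $(c)\Rightarrow(a)$, and your instinct there was exactly right: the per-$\alpha$ equalities $k_{A,\lambda\alpha}k_{A,\alpha}=m_{A,\lambda\alpha}m_{A,\alpha}$ extracted from the sandwich in Theorem~\ref{Tm} give no information about $k_{A,\alpha}$ versus $m_{A,\alpha}$ whenever $k_{A,\lambda\alpha}=0$, in particular whenever $\lambda\alpha\notin S(A)$. The paper's own proof does not address this: it simply asserts ``if (c) is true, then $k_{A,\alpha}=m_{A,\alpha}$ for each $\alpha\in S(A)$,'' which is precisely the unjustified leap you flagged.

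None of your proposed repairs (orbit-peeling via Theorem~\ref{T:decom2}, treating the eigenvalue $0$ separately, rerunning on $A_\inv$, the fallback via Proposition~\ref{PK}) can work, because $(c)\Rightarrow(a)$ is in fact false for general $\lambda$. Take $F=\mathbb{Q}$, $\lambda=2$, and $A=\left(\begin{smallmatrix}1&1\\0&1\end{smallmatrix}\right)$. Then $S(A)=\{1\}$, $m_{A,1}=2$, $k_{A,1}=1$, and $\lambda\cdot 1=2\notin S(A)$, so both sums in (c) are $0$. Since $A$ is cyclic, Theorem~\ref{T:Zero} gives $\dim C_F(A,2)=\deg\gcd\bigl((t-1)^2,(t-\tfrac12)^2\bigr)=0$ (one can also check directly that $AB=2BA$ forces $B=0$). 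Thus (c) holds, while $A$ is a nontrivial Jordan block, so (a) and (b) fail. For $\lambda=1$ the statement is sound, since the term-by-term equality becomes $k_{A,\alpha}^2=m_{A,\alpha}^2$ for all $\alpha$ and hence $k_{A,\alpha}=m_{A,\alpha}$; but for $\lambda\ne 1$ the hypothesis (c) as written is too weak. You correctly diagnosed the gap; the fault lies in the statement and the paper's proof, not merely in your attempted closure, and your long search for a fix was doomed from the start.
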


\begin{proof}
If (a) is true, then so is (b) by the definition of semisimple. If (b) is
true, then $k_{A,\alpha}=m_{A,\alpha}$ for each root $\alpha$ of $c_A(t)$. Hence
Theorem~\ref{Tm} implies (c). If (c) is true, then $k_{A,\alpha}=m_{A,\alpha}$
for each $\alpha\in S(A)$ and so $A$ is diagonalizable over $L$, and hence
$A_\inv$ and $A_\nil$ are semisimple over $F$. The latter implies $A_\nil=0$,
and so (a) is true.
\end{proof}

\section{The probability that \texorpdfstring{$C_F(A,\lambda)\ne\{0\}$}{}}
\label{S:Quokka}

In this section we estimate the probability that a uniformly
distributed $A\in\F_q^{n\times n}$ has $C_{\F_q}(A,\lambda)\ne\{0\}$.
We will see that the (abundant) invertible matrices
contribute a small amount to the probability, and the (rare) singular matrices
contribute a lot. 

\begin{lemma}\label{L:1/q}
The probability that a uniformly distributed $A\in \F_q^{n\times n}$
has $C_F(A,\lambda)\ne\{0\}$ is at least $q^{-1}$ (because
$C_F(A,\lambda)\ne\{0\}$ whenever $A$ is singular).
\end{lemma}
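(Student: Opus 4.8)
The plan is to reduce the statement to a count of singular matrices and then invoke Corollary~\ref{C:det}. That corollary says $C_F(A,\lambda)\ne\{0\}$ whenever $\det(A)=0$, for every $\lambda\in F$. Hence, working over $F=\F_q$,
\[
  \Pr\bigl(C_{\F_q}(A,\lambda)\ne\{0\}\bigr)\ \ge\ \Pr\bigl(\det(A)=0\bigr),
\]
and it suffices to show that the right-hand side is at least $q^{-1}$.

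Next I would count the invertible matrices. A matrix $A\in\F_q^{n\times n}$ is invertible precisely when its rows form a basis of $\F_q^n$; choosing the rows one at a time, with each new row avoiding the span of the preceding ones, gives $|\GL(n,\F_q)|=\prod_{i=0}^{n-1}(q^n-q^i)$. Dividing by $q^{n^2}$ yields
\[
  \Pr\bigl(\det(A)\ne0\bigr)=\prod_{i=0}^{n-1}\bigl(1-q^{i-n}\bigr)=\prod_{j=1}^{n}\bigl(1-q^{-j}\bigr).
\]

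Finally, since $n\ge1$ and each factor with $j\ge2$ satisfies $0<1-q^{-j}\le1$ (here $q\ge2$ is used to see the factors are positive), the product is at most its $j=1$ factor, namely $1-q^{-1}$. Therefore $\Pr(\det(A)=0)=1-\prod_{j=1}^{n}(1-q^{-j})\ge 1-(1-q^{-1})=q^{-1}$, which together with the first display proves the lemma; equality holds exactly when $n=1$, in which case $A=0$ is the only singular matrix. There is no genuine obstacle in this argument — the only point requiring a moment's care is that the omitted factors $1-q^{-j}$ are strictly positive, so discarding them can only decrease the product.
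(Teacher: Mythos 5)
Your proof is correct and follows essentially the same route as the paper's: reduce to counting singular matrices via Corollary~\ref{C:det} and then show $\Pr(\det A\ne0)\le 1-q^{-1}$. The only difference is cosmetic: the paper cites \cite[Lemma~3.5]{NeuP} for the inequality $|\GL(n,q)|/q^{n^2}\le 1-q^{-1}$, whereas you derive it directly from the standard formula $|\GL(n,q)|=\prod_{i=0}^{n-1}(q^n-q^i)$.
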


\begin{proof}
By  Corollary~\ref{C:det}, $C_F(A,\lambda)\ne\{0\}$ is true
for every singular matrix $A$. Thus the probability 
that  a uniformly distributed $A\in \F_q^{n\times n}$
has $C_F(A,\lambda)\ne\{0\}$ is at least the probability that
$A$ is singular. The latter probability is $1- |\GL(n,q)|/q^{n^2}$.

Moreover, by~\cite[Lemma~3.5]{NeuP},
\begin{equation}\label{E:GL}
  1-q^{-1}-q^{-2}<\frac{|\GL(n,q)|}{q^{n^2}}\le 1-q^{-1}
\end{equation}
and hence the probability that $A$ is singular
is at least $q^{-1}$.
\end{proof}

Recall the definition of a semisimple matrix given before Corollary~\ref{C:TFAE}. 
A matrix $U\in F^{n\times n}$ is called \emph{unipotent} if $(U-I)^n=0$, i.e.
$c_U(t)=(t-1)^n$. Each matrix $A\in \GL(n,q)$ has a unique `Jordan
 decomposition' $A=SU=US$ with $S$ semisimple and $U$ unipotent in
 $\GL(n,q)$ (see \cite[p.11]{RC} or \cite[Section 1.1]{NP}). We sometimes
refer to $S$ and $U$ as the semisimple and unipotent parts of $A$, respectively.

\begin{theorem}
Suppose $F=\F_q$ and $\lambda\in F$. Let $\pi$ be
the probability that a uniformly distributed $A\in F^{n\times n}$
has $C_F(A,\lambda)\ne\{0\}$. Then $\pi\ge q^{-1}$ if $\lambda=0$, and
$\pi=1$ if $\lambda=1$.
For $\lambda\not\in\{0,1\}$, if $n\ne 3$ then
$\pi\ge \frac54 q^{-1}$, while if $n=3$ then
$\pi\ge \frac76 q^{-1}$.
\end{theorem}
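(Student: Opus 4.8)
The plan is to split on the three cases for $\lambda$. The cases $\lambda=0$ and $\lambda=1$ are immediate: when $\lambda=0$, Lemma~\ref{L:1/q} (or directly Lemma~\ref{L:k0}) gives $\pi\ge q^{-1}$; when $\lambda=1$, the identity matrix $I_n$ always lies in $C_F(A,1)$, so $\pi=1$. The substance is the case $\lambda\notin\{0,1\}$, where we must beat the bound $q^{-1}$ of Lemma~\ref{L:1/q} by accounting for a positive contribution from certain \emph{invertible} matrices $A$. The strategy is to identify a concrete family of invertible $A$ with $C_F(A,\lambda)\ne\{0\}$, count (or lower-bound) its size, and add this to the singular contribution, being careful that the two families are disjoint (they are, since one consists of singular matrices and the other of invertible ones).

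First I would produce invertible witnesses. By Theorem~\ref{T:Zero}, if $A$ is cyclic then $C_F(A,\lambda)\ne\{0\}$ exactly when $\gcd(c_A(t),c_{A,\lambda}(t))\ne1$, i.e. when $c_A(t)$ and $\lambda^{-n}c_A(\lambda t)$ share a root; equivalently (since the roots of the latter are $\lambda^{-1}$ times those of $c_A$) when the spectrum $S(A)$ contains a pair $\alpha,\lambda\alpha$. The cheapest way to guarantee this is to force $S(A)\supseteq\{\alpha,\lambda\alpha\}$ for some nonzero $\alpha\in\F_q$: take $c_A(t)=(t-\alpha)(t-\lambda\alpha)\,g(t)$ with $g$ an arbitrary monic polynomial of degree $n-2$ chosen so that $A$ is cyclic (e.g. $g$ coprime to $(t-\alpha)(t-\lambda\alpha)$, or simply take the companion matrix of $(t-\alpha)(t-\lambda\alpha)g(t)$ whenever that polynomial is squarefree — a companion matrix is automatically cyclic). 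Such $A$ is invertible provided $\alpha\ne0$ and $g(0)\ne0$. Counting conjugates: the number of invertible matrices with a given squarefree characteristic polynomial $c$ of degree $n$ is $|\GL(n,q)|/|C_{\GL(n,q)}(A)|$, and for cyclic $A$ the centralizer in $\GL(n,q)$ has order $|(\F_q[t]/(c))^\times|$, which is bounded above by $q^n$. Summing over a suitable supply of admissible pairs $(\alpha,g)$ — with $\alpha$ ranging over $\F_q^\times\setminus\{$values making things collide$\}$ and $g$ over monic squarefree degree-$(n-2)$ polynomials coprime to the rest — gives a count of invertible witnesses that is a constant times $q^{n^2}\cdot q^{-1}$ after dividing out, for $n$ large; one then checks the constant is at least $\tfrac14$ (hence $\pi\ge\tfrac54 q^{-1}$), and treats the small exceptional value $n=3$ by a separate, more careful count giving the weaker $\tfrac76 q^{-1}$. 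The reason $n=3$ is special is that with $\deg g=1$ there is very little room: $g(t)=t-\beta$ must avoid $\beta\in\{0,\alpha,\lambda\alpha\}$, shrinking the supply; for $n\ge4$ (and the degenerate small cases $n=1,2$ handled by hand) there is enough slack.

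The main obstacle is the bookkeeping in the count: one needs a clean lower bound for the number of invertible cyclic matrices over $\F_q$ whose characteristic polynomial is divisible by $(t-\alpha)(t-\lambda\alpha)$, uniformly enough in $q$ to extract the explicit constants $\tfrac54$ and $\tfrac76$, and one must ensure no double-counting across different choices of $\alpha$ (a matrix could a priori have several such eigenvalue pairs). The way around this is to count \emph{ordered} data (matrix together with a distinguished eigenvalue pair), get an upper bound on the multiplicity, and divide; alternatively, restrict to matrices whose characteristic polynomial has \emph{exactly} the prescribed structure (squarefree, with $\alpha,\lambda\alpha$ simple roots and $g$ having no root in the orbit of $\alpha$ under multiplication by powers of $\lambda$), which makes the correspondence between matrices and $(\alpha,g)$-data essentially bijective up to the known centralizer factor. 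Once the invertible contribution is shown to be at least (a constant)$\cdot q^{-1}$, adding the singular contribution $\ge q^{-1}$ from Corollary~\ref{C:det} and using the bound on $|\GL(n,q)|/q^{n^2}$ from~\eqref{E:GL} to convert the raw counts into probabilities finishes the proof.
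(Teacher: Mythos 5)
Your high-level structure is correct and matches the paper: the cases $\lambda=0$ and $\lambda=1$ are dispatched immediately, and for $\lambda\notin\{0,1\}$ one adds to the singular contribution of Lemma~\ref{L:1/q} a positive contribution from invertible matrices $A$ whose characteristic polynomial is divisible by $(t-\alpha)(t-\lambda\alpha)$ for some $\alpha\in\F_q^\times$. The sufficient criterion (eigenvalue pair $\alpha,\lambda\alpha$) is exactly the one the paper uses, and your final conversion of the count into a probability via the bound~\eqref{E:GL} is also the paper's step.

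However, the heart of the proof is precisely the step you defer to ``bookkeeping'': showing that the proportion of $\GL(n,q)$ consisting of such matrices is at least $\frac{1}{\delta(q-1)}$ with $\delta=4$ (or $6$ when $n=3$). You sketch a direct count over cyclic matrices with prescribed squarefree characteristic polynomial, summing over pairs $(\alpha,g)$, but you never extract the constants, and it is not clear your approach would. The difficulty is that the number of matrices per characteristic polynomial is $|\GL(n,q)|/|C_{\GL}|$, and $|C_{\GL}|$ varies substantially with the factorization pattern of $c_A$; a naive count over polynomials is not weighted the same as a count over matrices, and the double-counting across choices of $\alpha$ is nontrivial to control. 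The paper avoids this entirely by observing that the witness set $C$ is a \emph{quokka set} (it depends only on the semisimple part and is closed under conjugation) and then invoking~\cite[Theorem~1.3]{NP}, which reduces the proportion $|C|/|\GL(n,q)|$ to an average over maximal tori weighted by $S_n$-conjugacy class sizes. Concretely, the paper shows $|T\cap C|/|T|\ge\frac{1}{q-1}$ whenever the torus $T$ corresponds to a cycle type with at least two fixed points, and then the constants $\frac14$ and $\frac16$ are exactly the proportions $p(n)$ of permutations in $S_n$ with at least two fixed points, computed from the derangement formula. This machinery is the missing ingredient; without it, your direct count is an unverified sketch.

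Your explanation of why $n=3$ is exceptional is also not the real reason. You attribute it to the shortage of admissible degree-$1$ polynomials $g(t)=t-\beta$ with $\beta\notin\{0,\alpha,\lambda\alpha\}$; but the paper's exceptional constant comes from $p(3)=1/6$ (in $S_3$ only the identity has $\ge 2$ fixed points), whereas $p(n)\ge 1/4$ for all other $n\ge 2$. Finally, one small point: Theorem~\ref{T:Zero} requires $A$ cyclic, which your restriction to squarefree $c_A$ ensures, but the paper's set $C$ contains non-cyclic matrices as well, so strictly the nonvanishing for all of $C$ is better justified by Theorem~\ref{Tm} (which gives $\dim C_F(A,\lambda)\ge k_{A,\lambda\alpha}k_{A,\alpha}\ge 1$ when $\alpha,\lambda\alpha\in S(A)$) rather than Theorem~\ref{T:Zero}.
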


\begin{proof}
{\sc Case $\lambda=0$.} By Lemma~\ref{L:k0}, $C_F(A,\lambda)\ne\{0\}$
if and only if $A$ is singular. Thus $\pi\ge q^{-1}$ by Lemma~\ref{L:1/q}.

{\sc Case $\lambda=1$.} Since $C_F(A,1)$ always contains the scalar matrices,
we see $\pi=1$.

{\sc Case $\lambda\not\in\{0,1\}$.} In this case
$q\ge|\{0,1,\lambda\}|=3$. Let $C$ denote the subset of matrices 
$A\in\GL(n,F)$ such that $c_A(t)$ is divisible by 
$(t-\alpha)(t-\lambda \alpha)$, for some non-zero $\alpha\in F$. It follows 
from Theorem~\ref{T:Zero} that $C_F(A,\lambda)\ne \{0\}$ for each such matrix $A$.
We will prove that $\frac{|C|}{|\GL(n,F)|}\geq \frac{1}{\delta(q-1)}$, where $\delta = 4$ if $n\ne3$ and $\delta = 6$ if $n=3$.

 We note that $C$ is closed under conjugation by elements of
 $\GL(n,q)$.  Also if an element $A\in\GL(n,q)$ has `Jordan
 decomposition' $A=SU=US$ with $S$ semisimple and $U$ unipotent in
 $\GL(n,q)$, then $c_A(t)=c_S(t)$, and hence $A\in C$ if and only if
 $S\in C$. Thus $C$ is a quokka set, as defined in
 \cite[Definition 1.1]{NP}. Let $A\in C$ with $c_A(t)$ divisible by
 $(t-\alpha)(t-\lambda\alpha$), for some non-zero $\alpha\in F$. The
 semisimple part $S$ of $A$ preserves a direct sum decomposition of
 $V$ of the form $V_1\oplus V_2\oplus V_3$, where $V_1=\langle
 u\rangle$ with $uS = \alpha u$ , and $V_2=\langle v\rangle$ with $vS
 = \lambda \alpha v$. Thus we may assume that $S =
 \left(\begin{smallmatrix}S_1&0\\0&S_2\end{smallmatrix}\right)$ with
   $S_1 = \left(\begin{smallmatrix}\alpha&0\\0&\lambda
     \alpha\end{smallmatrix}\right)$ and $S_2$ semisimple in
     $\GL(n-2,q)$. 
The matrix $S$ is contained in a maximal abelian subgroup $T$ of $\GL(n,q)$
of the form $T\cong C_{q-1} \times C_{q-1} \times T'$, where $T'$ is a maximal abelian
subgroup of $\GL(n-2,q)$, and $T$ consists of matrices of the form
\[
\left(\begin{smallmatrix}
x&0&0\\ 
0&y&0\\
0&0&X\end{smallmatrix}\right)\qquad\textup{with $x,y\in F\setminus\{0\}$ and $X\in T'\le\GL(n-2,F)$}.
\]
The subgroup $T$ is sometimes called a \emph{maximal torus} of $\GL(n,q)$ as described in~\cite[Section~1.1]{NP}, and $T$ corresponds to a conjugacy class of permutations in $S_n$ with at least two fixed points because of the two direct factors $C_{q-1}$. Moreover, each such conjugacy class of $S_n$ corresponds to a maximal abelian subgroup $T$ of this form. Each of these matrices with $y=\lambda x$ or $y=\lambda^{-1}x$ belongs to $C$. Thus $|T\cap C|/|T|\geq \frac{1}{q-1}$ (and even $|T\cap C|/|T|\geq \frac{2}{q-1}$ if $\lambda\ne -1$).

It now follows from \cite[Theorem 1.3]{NP} that 
$|C|/|\GL(n,q)|\geq  \frac{p(n)}{q-1}$, where $p(n)$ is the proportion of permutations in $S_n$ with at least two fixed points. A direct computation shows that $p(n)\geq 1/4$ for $n=2, 4, 5$ while $p(3)=1/6$. Moreover, by \cite[p.159]{Aigner}, 
the proportion of derangements in $S_n$ is $\sum_{k=0}^{n} (-1)^k/k!$,  and the proportion of elements of $S_n$ with exactly one fixed point is 
$\sum_{k=0}^{n-1} (-1)^k/k!$, and hence $p(n) = 1- 2 \sum_{k=0}^{n-1} (-1)^k/k! - (-1)^n/n!$. Thus for $n\geq6$,
\[ 
p(n)\geq 1 - 2( 1 - 1 + \frac{1}{2}-\frac{1}{6}+\frac{1}{24}) = \frac{1}{4}. 
\]
We have therefore proved that $|C|/|\GL(n,q)|\geq  \frac{1}{\delta(q-1)}$ (or $\frac{2}{\delta(q-1)}$ if $\lambda\ne -1$) where $\delta=4$ 
if $n\ne 3$, and $\delta = 6$ when $n=3$.

The set $S$ of singular vectors is disjoint from $C$. Using~\eqref{E:GL}
and the above bound~gives
\begin{align*}
  \pi&\ge \frac{|S|+|C|}{q^{n^2}}
  =\frac{q^{n^2}-|\GL(n,q)|}{q^{n^2}}
    +\frac{|\GL(n,q)|}{q^{n^2}}\frac{|C|}{|\GL(n,q)|}\\
  &= 1-\frac{|\GL(n,q)|}{q^{n^2}}\left(1-\frac{|C|}{|\GL(n,q)|}\right)
  \ge 1-(1-q^{-1})\left(1-\frac{1}{\delta(q-1)}\right)\\
  &=1-(1-q^{-1})+\frac{q^{-1}}{\delta}=\frac{(\delta+1)q^{-1}}{\delta}
  =\begin{cases}
    \frac54 q^{-1}\quad&\textup{if $n\ne 3$,}\\
    \frac76 q^{-1}\quad&\textup{if $n=3$.}\\
    \end{cases}
\end{align*}
This completes the proof.
\end{proof}

\section{Upper bounds for \texorpdfstring{$\dim C_F(A,\lambda)$}{}}\label{S:UB}

For a positive integer $r$, and let~$[r]$ denote the set $\{1,2,\dots,r\}$.
A permutation $\sigma$ of~$[r]$ is called a \emph{derangement} if it
has no fixed points (i.e. $i\sigma\ne i$ for $i\in[r]$).

\begin{lemma}\label{L:der}
Suppose $1\le s\le r$ and $\sigma\colon [s]\to [r]$ is an injective map
with no fixed points. Then $\sigma$ extends to a derangement of 
a subset of $[r]$ of size $r-1$ or $r$.
\end{lemma}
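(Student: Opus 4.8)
\textbf{Proof plan for Lemma~\ref{L:der}.}
The plan is to build the desired derangement by starting from the functional graph of $\sigma$ and repairing the one structural defect that prevents $\sigma$ itself from being a derangement of its support. Think of $\sigma$ as a partial function on $[r]$: its domain is $[s]$, it is injective, and it has no fixed point. Form the directed graph $G$ on vertex set $[r]$ with an edge $i\to i\sigma$ for each $i\in[s]$. Since $\sigma$ is injective with domain $[s]$, every vertex has out-degree at most $1$ and in-degree at most $1$, so each connected component of $G$ is either a directed path or a directed cycle, and no cycle is a loop because $\sigma$ is fixed-point-free. Let $D=[s]$ be the set of vertices with an outgoing edge.

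The key observation is that the vertices lying on cycles of $G$, together with all vertices on paths \emph{except the terminal vertex of each path}, already form a set on which $\sigma$ restricts to a derangement: on a cycle $\sigma$ permutes the vertices with no fixed point, and on a path $i_1\to i_2\to\cdots\to i_\ell$ the map sends $i_1,\dots,i_{\ell-1}$ injectively into $\{i_2,\dots,i_\ell\}$. The only obstruction is that each maximal path has a terminal vertex $i_\ell$ (the unique vertex of that component not in $D$) which has no image assigned. So I first handle the case where $G$ has \emph{no} paths at all, i.e. $D=[s]$ is a union of cycles and hence $s<r$ is impossible only if... — more precisely, if every component is a cycle then $\sigma$ is already a derangement of $[s]$, a set of size $s$; if $s=r$ we are done, and if $s<r$ then $[s]$ has size $s\le r-1$ and we note that a fixed-point-free permutation of a set of any size $\ge 2$ exists, so we may assume $s\ge 2$ (the case $s=1$ forces a path, handled below) and we are already done with a derangement of a subset of size $s\le r-1$.

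The substantive case is when $G$ contains at least one path; say the paths have terminal vertices $p_1,\dots,p_m$ (with $m\ge 1$) and initial vertices $q_1,\dots,q_m$. I extend $\sigma$ by adding, for each $j$, the edge $p_j\to q_{j+1}$ (indices mod $m$), thereby splicing all the paths into a single long cycle through $p_1,q_1$-chain$,p_2,q_2$-chain$,\dots$; this introduces a fixed point only if some path is a single vertex $p_j=q_j$ and $m=1$, i.e. there is exactly one path and it is a lone vertex. If that degenerate situation occurs, that lone path-vertex is an isolated vertex of $G$, we simply omit it: the remaining vertices (cycles plus the non-terminal path vertices, of which there are none here) give a derangement of a subset of size $r-1$. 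In all other cases the splice produces a genuine cycle (length $\ge 2$) and, combined with the existing cycles of $G$, yields a fixed-point-free permutation of all of $[r]$, so we get a derangement of size $r$. The main obstacle — really the only thing needing care — is the bookkeeping in this last paragraph: verifying that the splice never creates a fixed point and isolating precisely the one degenerate configuration (a single isolated domain-less vertex) that forces us down to size $r-1$. Everything else is a routine unwinding of the path/cycle decomposition of a partial injection.
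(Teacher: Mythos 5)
Your proof takes a genuinely different route from the paper's. The paper works set-theoretically with the Venn-diagram partition $[r]=Z_{11}\cup Z_{12}\cup Z_{21}\cup Z_{22}$, where $Z_{ij}=Y_i\cap X_j$, $X_1=[s]$, $Y_1=[s]\sigma$; since $\sigma$ already gives a fixed-point-free bijection $X_1\to Y_1$, extending $\sigma$ to a derangement amounts to finding a fixed-point-free bijection $X_2\to Y_2$. As $X_2=Z_{12}\cup Z_{22}$ and $Y_2=Z_{21}\cup Z_{22}$, any bijection $Z_{12}\to Z_{21}$ is automatically fixed-point-free (the two sets are disjoint), and what remains is a derangement of $Z_{22}$, which exists unless $|Z_{22}|=1$, in which case one drops that lone vertex. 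You instead use the path/cycle decomposition of the functional graph of the partial injection, leave cycles alone, and splice all maximal paths into one long cycle, with a single degenerate case: a lone isolated vertex. The two approaches match up precisely: $Z_{21}$ and $Z_{12}$ are the initial and terminal vertices of the non-trivial paths, and $Z_{22}$ is the set of isolated vertices. Your degenerate case ($m=1$, the path is a single vertex) is exactly $|Z_{22}|=1$ and $Z_{12}=\emptyset$. Interestingly, your splice is a little more economical: when $|Z_{22}|=1$ but $Z_{12}\ne\emptyset$, the paper's recipe drops a vertex and lands on size $r-1$, while your splice absorbs the isolated vertex into the long cycle and gives a full derangement of $[r]$. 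Both outcomes, of course, satisfy the lemma.

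One passage needs repair. In the paragraph where you ``first handle the case where $G$ has no paths at all'' and then subdivide on $s<r$, you conclude with ``we are already done with a derangement of a subset of size $s\le r-1$.'' That is not enough: the lemma demands a subset of size exactly $r-1$ or $r$, and $s$ may be strictly smaller than $r-1$. For instance, take $r=5$, $s=2$, $\sigma=(1\,2)$; then $\sigma$ is a derangement of a $2$-element set, but you still must absorb the three isolated vertices to reach size $4$ or $5$. The way out is already latent in your setup: since you treat an isolated vertex as a degenerate (single-vertex) path — and your later argument relies on this, calling the lone problematic vertex ``a lone path-vertex'' — any maximal path must end at a vertex with no outgoing edge, i.e.\ a vertex outside $[s]$. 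Hence ``$G$ has no paths'' forces $s=r$, the subcase $s<r$ is vacuous, and the paragraph should simply say: if there are no paths then $s=r$ and $\sigma$ is already a derangement of $[r]$. Your splicing argument in the ``substantive case'' then correctly covers every $s<r$ situation, including the one above (the three isolated vertices become three single-vertex paths, $m=3\ge 2$, and the splice turns them into a $3$-cycle).
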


\begin{proof}
Set $X_1=[s]$, $X_2=[r]\setminus X_1$, $Y_1=[s]\sigma$, and
$Y_2=[r]\setminus Y_1$.
Define the partition $[r]=Z_{11}\cup Z_{12}\cup Z_{21}\cup Z_{22}$
as in Figure~\ref{F:Venn}.
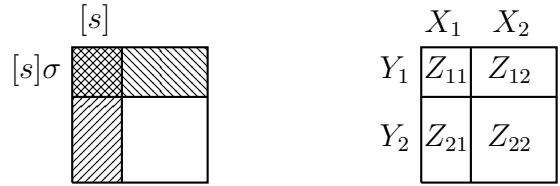
\begin{figure}[!ht]
  \caption{A partition $[r]=Z_{11}\cup Z_{12}\cup Z_{21}\cup Z_{22}$ with $Z_{ij}=Y_i\cap X_j$.}\label{F:Venn}
  \vskip3mm
  \begin{center}
  \begin{tikzpicture}[scale=0.6]
    \coordinate [label=above:\textcolor{black}{$[s]$}] (X1) at (0.5,3);
    \coordinate [label=left:\textcolor{black}{$[s]\sigma$}] (Y2) at (0,2.5);
    \pattern[pattern=north east lines] (0,0) rectangle (1.1,3);
    \pattern[pattern=north west lines] (0,1.9) rectangle (3,3);
    \draw[thick,black](0,0)--(0,3)--(3,3)--(3,0)--(0,0);
    \draw[thick,black](0,1.9)--(3,1.9);
    \draw[thick,black](1.1,0)--(1.1,3);
  \end{tikzpicture}
  \hskip20mm
  \begin{tikzpicture}[scale=0.6]
    \coordinate [label=left:\textcolor{black}{$Z_{21}$}] (S1) at (1.3,1);
    \coordinate [label=left:\textcolor{black}{$Z_{11}$}] (S2) at (1.3,2.5);
    \coordinate [label=left:\textcolor{black}{$Z_{12}$}] (T1) at (2.7,2.5);
    \coordinate [label=left:\textcolor{black}{$Z_{22}$}] (T2) at (2.7,1);
    \coordinate [label=above:\textcolor{black}{$X_1$}] (X1) at (0.5,3);
    \coordinate [label=above:\textcolor{black}{$X_2$}] (X2) at (2,3);
    \coordinate [label=left:\textcolor{black}{$Y_2$}] (Y1) at (0,1);
    \coordinate [label=left:\textcolor{black}{$Y_1$}] (Y2) at (0,2.5);
    \draw[thick,black](0,0)--(0,3)--(3,3)--(3,0)--(0,0);
    \draw[thick,black](0,1.9)--(3,1.9);
    \draw[thick,black](1.1,0)--(1.1,3);
  \end{tikzpicture}
  \end{center}
\end{figure}
Note that $Z_{12}=Z_{21}\sigma$ and $|Z_{12}|=|Z_{12}|$.
We try to extend $\sigma$, when possible, by a fixed-point-free bijection
$\tau\colon Z_{12}\cup Z_{22}\to Z_{21}\cup Z_{22}$,
to a derangement of $[r]$. We define $\tau$ with two maps, one with
$Z_{12}\to Z_{21}$ which always works, and one with $Z_{22}\to Z_{22}$ which
works if $|Z_{22}|\ne1$. The first part of $\tau$ is the
inverse of the map $Z_{12}\to Z_{21}\colon i\mapsto i\sigma$.
This is fixed-point-free since $Z_{12}\cap Z_{21}=\emptyset$.
Take the second part of $\tau$ to be a derangement
of $Z_{22}$; this exists if and only if $|Z_{22}|\ne1$.
If $|Z_{22}|=1$, then $\sigma$ extended by $\tau$ gives a
derangement of the subset $[r]\setminus Z_{2,2}$ of cardinality $r-1$.
\end{proof}

Recall that $m_0:=\dim \{v\in F^n\mid vA^n=0\}$ and
$k_0:=\dim \{v\in F^n\mid vA=0\}$. In the following
theorem a lower bound for the rank $\rk(A)=n-k_0$ gives an upper bound for
$\dim C_F(A,\lambda)$, \emph{c.f.} Corollary~\ref{C:rank1}.

\begin{theorem}\label{T:UB}
Suppose that $A\in F^{n\times n}$ and $\lambda\in F$ is not $0$ or $1$.
If $k_0+m_0/2\le n$, then $\dim C_F(A,\lambda)\le n^2/2$.
In particular, $\dim C_F(A,\lambda)\le n^2/2$ holds for invertible matrices $A$.
\end{theorem}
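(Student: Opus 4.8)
The idea is to split off the nilpotent and invertible parts of $A$ via Theorem~\ref{T:decom}, bound each contribution separately, and observe that the two bounds add up to exactly $n^2/2$ under the hypothesis. Write $m_0=\dim V_\nil$ and $n'=\dim V_\inv=n-m_0$, so $\dim C_F(A,\lambda)=\dim C_F(A_\nil,\lambda)+\dim C_F(A_\inv,\lambda)$.

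For the nilpotent summand I would apply Theorem~\ref{T:NilpDim}: if $A_\nil$ has Jordan type $\bigoplus_{i=1}^{k_0}J_{\mu_i}$ with $\sum_i\mu_i=m_0$, then $\dim C_F(A_\nil,\lambda)=\sum_{i=1}^{\mu_1}(\mu'_i)^2$. Since $\mu'_1=k_0$, $\mu'_1\ge\mu'_2\ge\cdots$, and $\sum_i\mu'_i=m_0$, this gives $\dim C_F(A_\nil,\lambda)\le\mu'_1\sum_i\mu'_i=k_0m_0$. The substantial step is to show $\dim C_F(A_\inv,\lambda)\le\frac12(n')^2$. Apply Theorem~\ref{Tm} to $A_\inv$ (which has $0\notin S(A_\inv)$) to get $\dim C_F(A_\inv,\lambda)\le\sum_{\alpha\in S(A_\inv)}m_{A_\inv,\lambda\alpha}\,m_{A_\inv,\alpha}$. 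List the distinct eigenvalues of $A_\inv$ as $\beta_1,\dots,\beta_r$, put $\mu_i=m_{A_\inv,\beta_i}$ (so $\sum_i\mu_i=n'$), and let $\sigma$ be the partial map on $\{1,\dots,r\}$ defined by $\beta_{\sigma(i)}=\lambda\beta_i$ whenever $\lambda\beta_i\in S(A_\inv)$. Because $\lambda\ne0$ this map is injective, and because $\lambda\ne1$ while each $\beta_i\ne0$ it has no fixed point; so, relabelling the index set so that $\mathrm{dom}\,\sigma$ is an initial segment (and noting there is nothing to prove if $\sigma$ has empty domain), Lemma~\ref{L:der} extends $\sigma$ to a derangement $\tau$ of some subset $T$. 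As every summand is non-negative,
\[
  \sum_{\alpha\in S(A_\inv)}m_{A_\inv,\lambda\alpha}\,m_{A_\inv,\alpha}
   \;=\;\sum_{i\in\mathrm{dom}\,\sigma}\mu_i\mu_{\sigma(i)}
   \;\le\;\sum_{i\in T}\mu_i\mu_{\tau(i)} .
\]
It then remains to prove that $\sum_{i\in T}\mu_i\mu_{\tau(i)}\le\frac12\big(\sum_{i\in T}\mu_i\big)^2$ for any derangement $\tau$ of a finite set and non-negative weights $\mu_i$. I would do this cycle by cycle: on a cycle $C$ of $\tau$ of length $\ge3$, with $M_C=\sum_{i\in C}\mu_i$, the elements $\tau(i)$ and $\tau^{-1}(i)$ are distinct and both differ from $i$, whence $2\sum_{i\in C}\mu_i\mu_{\tau(i)}=\sum_{i\in C}\mu_i(\mu_{\tau(i)}+\mu_{\tau^{-1}(i)})\le\sum_{i\in C}\mu_i(M_C-\mu_i)\le M_C^2$; on a $2$-cycle $(jk)$ one has $2\mu_j\mu_k\le\frac12(\mu_j+\mu_k)^2$ because $(\mu_j-\mu_k)^2\ge0$; summing over the cycles and using $\sum_C M_C^2\le\big(\sum_C M_C\big)^2$ yields the claim. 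Since $\sum_{i\in T}\mu_i\le n'$ this gives $\dim C_F(A_\inv,\lambda)\le\frac12(n')^2=\frac12(n-m_0)^2$.

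Adding the two bounds, $\dim C_F(A,\lambda)\le k_0m_0+\frac12(n-m_0)^2$. Since $\frac12n^2-\frac12(n-m_0)^2=\frac12m_0(2n-m_0)$, the desired inequality $\dim C_F(A,\lambda)\le n^2/2$ is equivalent to $k_0m_0\le\frac12m_0(2n-m_0)$, i.e.\ to $k_0+m_0/2\le n$ --- precisely the hypothesis. The last sentence follows because an invertible $A$ has $k_0=m_0=0$. I expect the only real obstacle to be the weighted derangement inequality above: the $2$-cycles have to be handled separately from the longer cycles, and these $2$-cycles genuinely arise when $\lambda=-1$, which is exactly the case where the bound $n^2/2$ is attained by Remark~\ref{R:2m}.
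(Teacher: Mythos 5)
Your proof is correct and follows the same overall strategy as the paper: decompose via Theorem~\ref{T:decom}, bound the nilpotent contribution by $k_0m_0$ via Theorem~\ref{T:NilpDim}, bound the invertible contribution by $\tfrac12(n-m_0)^2$ using the upper bound of Theorem~\ref{Tm} together with Lemma~\ref{L:der}, and add. The one place you genuinely diverge is the per-cycle estimate: the paper compares $\sum_{i\in C}m_{i\tau}m_i$ to the sum of distinct products $\sum_{i>j}m_im_j$, which forces it to treat $r=2$, the $2$-cycle, and the multi-cycle case by separate arguments, whereas you prove $2\sum_{i\in C}\mu_i\mu_{\tau(i)}=\sum_{i\in C}\mu_i\bigl(\mu_{\tau(i)}+\mu_{\tau^{-1}(i)}\bigr)\le\sum_{i\in C}\mu_i(M_C-\mu_i)\le M_C^2$ for any cycle of length $\ge3$ and dispose of $2$-cycles by the AM--GM inequality, then glue the cycles with $\sum_C M_C^2\le\bigl(\sum_C M_C\bigr)^2$. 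This symmetric double-counting argument is a genuine (if modest) streamlining: it gives the bound $\tfrac12(\sum\mu_i)^2$ uniformly without the paper's separate $k=2$ and $r=2$ digressions, and makes transparent that the only case of near-equality is the $2$-cycle, which is precisely what occurs when $\lambda=-1$.
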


\begin{proof}
We will prove below that
$\dim C_F(A,\lambda)\le n^2/2$ holds for all $A\in\GL(n,F)$.
Suppose that this is true, and suppose that $A\in F^{n\times n}$ is singular.
Then $A_\nil$ has order $m_0\ge1$
and $A_\inv$ has order $n-m_0$.
By Theorem~\ref{T:decom} and Lemma~\ref{L:k0} we have
$C_F(A,\lambda)\cong C_F(A_\nil,\lambda)\oplus C_F(A_\inv,\lambda)$ and
$\dim C_F(A_\nil,\lambda)\le k_0m_0$. Also by our assumption,
$\dim C_F(A_\inv,\lambda)\le (n-m_0)^2/2$.
Thus $\dim C_F(A,\lambda)\le k_0m_0+(n-m_0)^2/2$. This is at most $n^2/2$
if and only if $k_0+m_0/2\le n$.

It remains to prove $\dim C_F(A,\lambda)\le n^2/2$ for all $A\in\GL(n,F)$.
In this case $m_0=k_0=0$, and the bound $k_0+m_0/2\le n$ holds automatically.
By Theorem~\ref{Tm}, we have
\[
  \dim C_F(A,\lambda)\le\sum_{\alpha\in S(A)}m_{A,\lambda\alpha}m_{A,\alpha}
  \qquad\textup{where $c_A(t)=\prod_{\alpha\in S(A)}(t-\alpha)^{m_{A,\alpha}}$.}
\]
If $m_{A,\lambda\alpha}m_{A,\alpha}\ne0$, then $\alpha$ and $\lambda\alpha$
lie in $S(A)$. 
Therefore
$\dim C_F(A,\lambda)\le\sum_{\alpha\in\Gamma}m_{A,\lambda\alpha}m_{A,\alpha}$
where $\Gamma:=\{\alpha\in S(A)\mid \lambda\alpha\in S(A)\}$.
If $\Gamma=\emptyset$, then $\dim C_F(A,\lambda)\le0\le n^2/2$.
Assume now that $\Gamma\ne\emptyset$.

We simplify the notation by writing
$c_A(t)=\prod_{i=1}^r(t-\alpha_i)^{m_i}$ where $S(A)=\{\alpha_1,\dots,\alpha_r\}$.
Without loss of generality, suppose $\Gamma=\{\alpha_1,\dots,\alpha_s\}$
where $1\le s\le r$. 
Consider the map $\sigma\colon \{1,\dots,s\}\to\{1,\dots,r\}$ defined
by $\alpha_{i\sigma}=\lambda\alpha_i$. Since $\lambda\ne0$,
$\sigma$ is injective.
If $i\sigma=i$, then $\lambda\alpha_i=\alpha_i$, and since $\lambda\ne1$,
it follows that $\alpha_i=0$. This is a contradiction, as $\det(A)\ne0$.
Thus $\sigma$ has no fixed points, and 
$\dim C_F(A,\lambda)\le\sum_{i=1}^sm_{i\sigma}m_i$ where $n=\sum_{i=1}^rm_i$
by Theorem~\ref{Tm}.

Note that $\Gamma$ is the set of roots of $\gcd(c_A(t),c_{A,\lambda^{-1}}(t))$
because $c_A(t)$ has roots $\alpha_i$ and $c_{A,\lambda^{-1}}(t)$ has roots
$\lambda\alpha_i$. Since $1\le s\le r$, and $\sigma$ has no fixed points,
we have $r\ge2$. If $r=2$, then $\sigma$ must be the transposition $(1,2)$.
Thus 
\[
  \dim C_F(A,\lambda)\le\sum_{i=1}^sm_{i\sigma}m_i=m_2m_1+m_1m_2
  =2m_1m_2=2m_1(n-m_1)
  \le\frac{n^2}{2}.
\]
Suppose now that $r\ge3$. By Lemma~\ref{L:der} there is a derangement
$\tau$ of a $k$-subset of $\{1,\dots,r\}$
with $s\le k$, $k\in\{r-1,r\}$ and $i\tau=i\sigma$
for $1\le i\le s$.
Suppose that $\tau$ has one cycle. Without loss of generality, suppose
that $\tau=(1,2,\dots,k)$. Then
$\sum_{i=1}^sm_{i\sigma}m_i\le\sum_{i=1}^km_{i\tau}m_i$.
Suppose further that $k=2$ and hence $r=3$.
Then 
\[
  \sum_{i=1}^km_{i\tau}m_i=m_2m_1+m_1m_2 \le\frac{(m_1+m_2)^2}{2}<\frac{n^2}{2}
\]
as desired.
Suppose now that $3\le k\le r$. If follows from
\[
  n^2=\left(\sum_{i=1}^r m_i\right)^2
  =\sum_{i=1}^r m_i^2+2\sum_{i>j} m_im_j\ge r+2\sum_{i>j} m_im_j,
\]
that $\sum_{i>j} m_im_j\le (n^2-r)/2$. Now
$\sum_{i=1}^km_{i\tau}m_i=m_1m_k+\sum_{i=1}^{k-1}m_{i+1}m_i$ and $m_km_1$ is not
a term in the sum as $k>2$. Therefore
\[
  \dim C_F(A,\lambda)\le\sum_{i=1}^km_{i\tau}m_i=m_km_1+\sum_{i=1}^{k-1}m_{i+1}m_i
  \le \sum_{i>j} m_im_j\le \frac{n^2-r}{2}<\frac{n^2}{2}.
\]

The final case to consider is when the derangement $\tau$ has $\ell>1$ cycles.
A typical cycle~$C$ has length $|C|\ge2$, as $\tau$ is a derangement.
Moreover, the reasoning of the previous paragraph gives
$\sum_{j\in C} m_im_{i\tau}\le m_C^2/2$ where
$m_C:=\sum_{j\in C} m_j$.  Therefore
\[
  \dim C_F(A,\lambda)\le\sum_{i=1}^km_{i\tau}m_i
  =\sum_C\sum_{j\in C} m_{i\tau}m_i\le\sum_C \frac{m_C^2}{2}
  <\frac12\left(\sum_C m_C\right)^2=\frac{n^2}{2}.
\]
This completes the proof.
\end{proof}

\begin{remark}
The bound $k_0+m_0/2\le n$ in Theorem~\ref{T:UB} is best possible.
Suppose that $A$ is nilpotent and $n$ is odd. Take
$\mu'_1=(n+1)/2$ and $\mu'_2=(n-1)/2$ in Theorem~\ref{T:NilpDim}.
This implies that $\mu=(2,\dots,2,1)$ with $(n-1)/2$ copies of~$2$.
Furthermore $k_0=\mu'_1=(n+1)/2$ and $m_0=n$. Therefore $k_0+m_0/2=n+1/2$, and
$\dim C_F(A,\lambda)=(\mu'_1)^2+(\mu'_2)^2=(n^2+1)/2\not\le n^2/2$.
\end{remark}

\begin{remark}\label{R:2m}
The bound $n^2/2$ in Theorem~\ref{T:UB} is attained
infinitely often. Suppose $\textup{char}(F)\ne 2$, $n=2m$ is even,
$A=\left(\begin{smallmatrix}I_m&0\\0&-I_m\end{smallmatrix}\right)$,
and $\lambda=-1$. Then $C_F(A,\lambda)$ contains
$\{\left(\begin{smallmatrix}0&R\\ S&0\end{smallmatrix}\right)\mid R,S\in F^{m\times m}\}$. Thus
$\dim C_F(A,\lambda)\ge 2m^2=n^2/2$, and hence $\dim C_F(A,\lambda)=n^2/2$.
\end{remark}

\begin{remark}
The probability $\pi$ that a uniformly random matrix
$A\in\F_q^{n\times n}$ does \emph{not} satisfy $k_0+m_0/2\le n$ turns out to
be small. To show this, first recall that the number of
nilpotent $n\times n$ matrices over $\F_q$ is $q^{n^2-n}$ by \cite{FH}.
Using this fact it is not hard to prove that the number of
$A\in\F_q^{n\times n}$ with $c_A(t)=t^{m_0}h(t)$ and $h(0)\ne0$ is
$q^{n^2-m_0}\omega(n,q)/\omega(m_0,q)$
where $\omega(n,q):=\prod_{i=1}^n(1-q^{-i})$.
Suppose $k_0+m_0/2> n$. Since $m_0\ge k_0$ this implies $3m_0/2>n$ and
$m_0>2n/3$. Let $\pi'$ be the probability that a uniformly random
$A\in\F_q^{n\times n}$ has $c_A(t)=t^{m_0}h(t)$ and $h(0)\ne0$ for some
$m_0>2n/3$. As $\omega(n,q)\le\omega(m_0,q)$, we have
\[
  \pi\le
  \pi'=\frac{1}{q^{n^2}}\sum_{2n/3<m_0\le n}q^{n^2-m_0}\frac{\omega(n,q)}{\omega(m_0,q)}
  <\sum_{2n/3<m_0<\infty}q^{-m_0}=\frac{q^{-\lceil2n/3\rceil}}{1-q^{-1}}.
\]
Hence $\pi< 2q^{-\lceil2n/3\rceil}$ which is \emph{very} small for large $n$.
\end{remark}

Alas~\cite[Theorem~2.1]{CC} is wrong. The proof uses~\cite[Lemma~3]{CG} which
is wrong. If $A$ is an irreducible $2\times 2$ matrix over $F$, then
$C_F(A)=F[A]=\langle I,A\rangle$ is two dimensional (thus achieving that
upper bound $(n-1)^2+1$), however, $A$ does not have the stated form (which
is a reducible matrix). We now correct and extend~\cite[Theorem~2.1]{CC}.
Note that when $A$ is a scalar matrix $C_F(A,\lambda)$ equals $F^{n\times n}$
when $\lambda=1$, and equals $\{0\}$ otherwise.

\begin{proposition}\label{P:UB}
Suppose $A\in F^{n\times n}$ is not a scalar matrix and $0\ne\lambda\in F$. Then
\[
  \dim C_F(A,\lambda)\le (n-1)^2+1.
\]
If $n\ge2$ and equality holds then
$m_A(t)$ is quadratic, and if $n\ge3$ then $m_A(t)$ is reducible.
\end{proposition}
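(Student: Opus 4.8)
The plan is to reduce to the cyclic case through the rational canonical form, bound each block of $B$ by a minimum of two degrees, and then run a short combinatorial optimisation over partitions of $n$. First I would replace $A$ by its rational canonical form; this changes nothing since conjugation preserves $\dim C_F(A,\lambda)$. Thus $A=\textup{diag}(A_1,\dots,A_r)$ with each $A_i$ cyclic, $c_{A_i}(t)=m_{A_i}(t)=f_i(t)$, and $f_1\mid f_2\mid\cdots\mid f_r$. Put $d_i=\deg f_i$, so $d_1\le\cdots\le d_r$ and $d_1+\cdots+d_r=n$. As $m_A(t)=f_r(t)$, the matrix $A$ fails to be scalar precisely when $d_r\ge 2$, equivalently when $r\le n-1$. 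Writing $B\in C_F(A,\lambda)$ in block form $B=(B_{ij})$ with $B_{ij}\in F^{d_i\times d_j}$, the equation $AB=\lambda BA$ decouples into $A_iB_{ij}=\lambda B_{ij}A_j$ for all $i,j$, so
\[
  \dim C_F(A,\lambda)=\sum_{i,j=1}^{r}\dim W_{ij},\qquad
  W_{ij}:=\bigl\{X\in F^{d_i\times d_j}\mid A_iX=\lambda XA_j\bigr\}.
\]

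The heart of the argument is the bound $\dim W_{ij}\le\min(d_i,d_j)$. If $X\in W_{ij}$ then $XA_j=\lambda^{-1}A_iX$ (here $\lambda\ne 0$ is used), so $XA_j^{k}=\lambda^{-k}A_i^{k}X$ for every $k\ge 0$. Since $A_j$ is cyclic there is a column vector $v$ with $v,A_jv,\dots,A_j^{\,d_j-1}v$ a basis of $F^{d_j}$; if $Xv=0$ then $X(A_j^{k}v)=\lambda^{-k}A_i^{k}(Xv)=0$ for all $k$, hence $X=0$. Thus $X\mapsto Xv$ embeds $W_{ij}$ into $F^{d_i}$ and $\dim W_{ij}\le d_i$. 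Dually $A_i^{k}X=\lambda^{k}XA_j^{k}$, and taking a row vector $w$ with $w,wA_i,\dots,wA_i^{\,d_i-1}$ a basis of $F^{1\times d_i}$ (available because $A_i$ is cyclic) shows $X\mapsto wX$ embeds $W_{ij}$ into $F^{1\times d_j}$, so $\dim W_{ij}\le d_j$. Hence $\dim C_F(A,\lambda)\le\sum_{i,j}\min(d_i,d_j)$; since the classical centralizer formula gives $\sum_{i,j}\min(d_i,d_j)=\dim C_F(A)$, this already recovers $\dim C_F(A,\lambda)\le\dim C_F(A)$.

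For the optimisation, note $\min(d_i,d_j)=\sum_{t\ge1}[d_i\ge t]\,[d_j\ge t]$, so $\sum_{i,j}\min(d_i,d_j)=\sum_{t\ge1}(d_t')^{2}$, where $d_t':=|\{i\mid d_i\ge t\}|$ is the conjugate partition of $(d_1,\dots,d_r)$; thus $\sum_t d_t'=n$ and $d_1'=r$. Because $A$ is non-scalar, $1\le d_1'=r\le n-1$, and therefore
\[
  \dim C_F(A,\lambda)\le\sum_{t\ge1}(d_t')^{2}\le(d_1')^{2}+\Bigl(\sum_{t\ge2}d_t'\Bigr)^{2}=(d_1')^{2}+(n-d_1')^{2}\le(n-1)^{2}+1,
\]
the last step because $s\mapsto s^{2}+(n-s)^{2}$ is convex and so attains its maximum on $\{1,\dots,n-1\}$ at an endpoint, where its value is $1+(n-1)^{2}$. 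Suppose now $n\ge 2$ and equality holds throughout. Then $(d_1')^{2}+(n-d_1')^{2}=(n-1)^{2}+1$ forces $d_1'\in\{1,n-1\}$, while $\sum_{t\ge2}(d_t')^{2}=(\sum_{t\ge2}d_t')^{2}$ forces $d'$ to have at most two nonzero parts. If $d_1'=1$ then $d'=(1^{n})$, which has $n$ parts, so $n=2$ and $(d_1,\dots,d_r)=(2)$; if $d_1'=n-1$ then $d'=(n-1,1)$ and $(d_1,\dots,d_r)=(1,\dots,1,2)$. In both cases $\deg m_A=\deg f_r=2$, so $m_A(t)$ is quadratic. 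When $n\ge 3$, only the second case can occur; then $r=n-1\ge 2$ and $f_{r-1}(t)$ is a monic linear factor of $f_r(t)=m_A(t)$, so $m_A(t)=(t-\alpha)(t-\beta)$ is reducible.

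I expect the main obstacle to be arranging for the bound to be tight enough: it is essential to decompose $A$ along its \emph{invariant factors} rather than into an arbitrary direct sum of $A$-cyclic subspaces, because a finer decomposition has more summands and would only give $\sum_{i,j}\min(d_i,d_j)$ for a larger value of $r$ (as large as $n^{2}$ when every part equals $1$, i.e.\ when $A$ is scalar). Granting the invariant-factor form, the only non-formal step is the blockwise inequality $\dim W_{ij}\le\min(d_i,d_j)$, and this is short precisely because each $A_i$ is cyclic.
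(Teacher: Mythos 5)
Your proof is correct, and it takes a genuinely different route from the paper. The paper splits into $\lambda=1$ and $\lambda\ne1$: for $\lambda\ne1$ it appeals to Theorem~\ref{T:UB} to get $\dim C_F(A,\lambda)\le n^2/2\le(n-1)^2+1$, while for $\lambda=1$ it runs a separate argument on the primary decomposition of $V$ as an $F[A]$-module and bounds each summand. You handle all $\lambda\ne0$ uniformly by passing to the invariant-factor form, proving the blockwise inequality $\dim W_{ij}\le\min(d_i,d_j)$ from cyclicity of the companion blocks, and then reducing the whole statement to the partition inequality $\sum_t(d_t')^2\le(d_1')^2+(n-d_1')^2\le(n-1)^2+1$ together with its equality analysis. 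This is not merely tidier: the paper's appeal to Theorem~\ref{T:UB} for $\lambda\ne1$ tacitly assumes the hypothesis $k_0+m_0/2\le n$ of that theorem, which is not available in general — for $n=3$ and nilpotent $A$ of Jordan type $(2,1)$ one has $k_0+m_0/2=7/2>3$ and $\dim C_F(A,\lambda)=5=(n-1)^2+1>n^2/2$, so the claimed $n^2/2$ bound (and the paper's ensuing deduction that equality forces $n=2$ when $\lambda\ne1$) does not hold there. Your direct blockwise argument needs no such restriction and correctly captures this equality case via $d'=(n-1,1)$. Your closing remark that the decomposition must use the invariant factors, rather than an arbitrary direct sum of cyclic pieces, is exactly the right caveat: a finer cyclic decomposition inflates $r=d_1'$ (up to $n$ for a semisimple non-scalar $A$) and destroys the convexity step.
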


\begin{proof}
The result is trivial if $n=1$. Suppose $n\ge2$.
If $\lambda\ne1$, then Theorem~\ref{T:UB} shows that
$\dim C_F(A,\lambda)\le n^2/2$. This is at most $(n-1)^2+1$.
If $\dim C_F(A,\lambda)=(n-1)^2+1$, then $n=2$ and $\dim C_F(A,\lambda)=2$.
Since $A$ is not a scalar, $m_A(t)$ is not linear. Hence $m_A(t)$ is
quadratic. Henceforth assume that $\lambda=1$.

Let $c_A(t)=f_1(t)^{e_1}\cdots f_r(t)^{e_r}$ where $f_1(t),\dots,f_r(t)$
are $r\ge1$
distinct irreducible polynomials over $F$, and $e_i\ge1$ for each $i$.
View $V=F^n$ as a right $F[A]$-module, or as a module over the principal ideal
ring $F[t]$, as in~\cite{HH}. 
The $f_i$-primary submodule of $V$ is
\[
  V_i:=\{v\in V\mid vf_i(A)^n=0\}.
\]
We have $V=V_1\oplus\cdots\oplus V_r$ and $\dim V_i=d_ie_i$ where
$d_i=\deg(f_i)$. It is well known that $C_F(A)=C_F(A_1)\oplus\cdots\oplus C_F(A_r)$
where $A_i$ acts on $V_i$
and $A=A_1\oplus\cdots\oplus A_r$, see~\cite{HH}. Since
$\dim C_F(A) \le\sum_{i=1}^r (d_ie_i)^2$ and $n=\sum_{i=1}^r d_ie_i$,
we see $\dim C_F(A)\le (n-1)^2+1$ for $r\ge2$. Suppose that equality holds.
Then $r=2$, and without loss of generality, $d_1e_1=n-1$ and $d_2e_2=1$.
Thus $\dim C_F(A_1)=(n-1)^2$ and $\dim C_F(A_2)=1$,
it follows that $A_1=\alpha_1 I_{n-1}$ and $A_2=\alpha_2 I_1$.
Hence $c_A(t)=(t-\alpha_1)^{n-1}(t-\alpha_2)$ and
$m_A(t)=(t-\alpha_1)(t-\alpha_2)$ is a reducible quadratic over $F$
with distinct roots.

Suppose now that $r=1$. If $d_1\ge2$, then $C_F(A)\le K^{n/d_1\times n/d_1}$
for an extension field $K$ of $F$ such that
$\dim_F K=d_1$, and $K\cong F[t]/(f_1(t))$. Therefore
\[
  \dim C_F(A)\le d_1(n/d_1)^2=n^2/d_1\le n^2/2\le (n-1)^2+1.
\]
Suppose now that $d_1=1$, and hence that $c_A(t)=(t-\alpha)^n$ and $A$
is $\alpha$-potent. Since $A-\alpha I$ is nilpotent
and $C_F(A)=C_F(A-\alpha I)$, $A$ is conjugate by an element of $\GL(n,F)$
to a matrix of the form $\alpha_1 I+\bigoplus_{i=1}^{k_0} J_{\mu_i}$ and
$\dim C_F(A)=\sum_{i=1}^{\mu_1} (\mu_i')^2$ by Theorem~\ref{T:NilpDim}.
Since $A\ne \alpha I$ it follows that $\mu_2'=|\{j\mid \mu_j\ge2\}|\ge1$
and hence $\dim C_F(A)\le (n-1)^2+1$.

Suppose now that $r=1$ and $\dim C_F(A,1)= (n-1)^2+1$.
As remarked above, $\dim C_F(A)\le n^2/d_1$. If $d_1\ge2$,
then $n^2/d_1<(n-1)^2+1$ unless $n=d_1=2$. In this case $c_A(t)=m_A(t)$
is an irreducible quadratic. Suppose now that $d_1=1$
and $c_A(t)=(t-\alpha_1)^n$. By hypothesis, $A$ is not a scalar matrix,
so $A\ne\alpha_1I_n$, and hence $\mu'_2=|\{j\mid \mu_j\ge2\}|\ge1$.
Therefore $\sum_{i=1}^{\mu_1} (\mu_i')^2\le (n-1)^2+1$. Suppose that equality holds.
Then
$\mu'_1=n-1$ and $\mu'_2=1$. Hence $\mu_1=2$ and $\mu_i=1$ for $2\le i\le n-1$.
Therefore $\bigoplus_{i=1}^{n-1} J_{\mu_i}$ equals $\alpha_1 I_n+E_{1,2}$
where $E_{1,2}$ is the $n\times n$ matrix with 1 in position
$(1,2)$, and zeros elsewhere. Thus the quadratic
$m_A(t)=(t-\alpha_1)^2$ is a reducible over~$F$.
\end{proof}

\begin{corollary}\label{C:bound}
Suppose $A\in F^{n\times n}$ is not a scalar. Then
$\dim C_F(A,\lambda)\le n^2-n$.
\end{corollary}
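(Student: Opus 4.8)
The plan is to dispose of the corollary by a two-case split on $\lambda$, quoting in each case a bound already established. First I would treat $\lambda=0$: since ``$A$ not a scalar'' in particular rules out $A=0$, Lemma~\ref{L:k0} applies and gives $\dim C_F(A,0)=k_0n$ with $k_0\le n-1$, hence $\dim C_F(A,0)\le n(n-1)=n^2-n$, as required.

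Next I would treat $\lambda\ne0$. Since every $1\times1$ matrix is a scalar, the hypothesis that $A$ is not scalar forces $n\ge2$, so Proposition~\ref{P:UB} is available and yields $\dim C_F(A,\lambda)\le(n-1)^2+1$. It then remains only to note that $(n-1)^2+1=n^2-2n+2\le n^2-n$, which is equivalent to $n\ge2$; this completes the case and hence the proof. I would also point out that, combined with the two possible values $0$ and $n^2$ of $\dim C_F(A,\lambda)$ in the excluded case where $A$ is scalar, this recovers the remark from Section~\ref{S:Intro} that $\dim C_F(A,\lambda)$ never lies strictly between $n^2-n$ and $n^2$.

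I do not anticipate any real obstacle here: all the substance sits inside Lemma~\ref{L:k0} and Proposition~\ref{P:UB}, and the only things left to check are the elementary inequality $(n-1)^2+1\le n^2-n$ for $n\ge2$ and the (vacuous) observation that a $1\times1$ matrix is automatically scalar.
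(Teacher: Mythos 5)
Your proof is correct and follows essentially the same route as the paper's: dispatch $\lambda=0$ via Lemma~\ref{L:k0} (using that ``not scalar'' implies $A\ne0$) and $\lambda\ne0$ via Proposition~\ref{P:UB} together with the elementary inequality $(n-1)^2+1\le n^2-n$ for $n\ge2$. The paper compresses the $n=1$ observation into a single sentence, but the structure is identical.
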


\begin{proof}
The result is true if $n=1$. Suppose $n\ge2$.
Since $(n-1)^2+1\le n^2-n$, we may assume that $\lambda=0$
by Proposition~\ref{P:UB}. The case when $\lambda=0$ is handled
by Lemma~\ref{L:k0} as $A\ne0$.
\end{proof}

\section*{Acknowledgments}

Both SPG and CEP gratefully acknowledge the support of the
Australian Research Council Discovery Grant DP160102323.

\end{document}